\def\SS{{{\mathbb S}}}
\def\RR{{\mathbb R}}
\def\gt{\widetilde{g}}
\tikzset{
	subset/.style={
		draw=none,
		edge node={node [sloped, allow upside down, auto=false]{$\subset$}}},
	Subset/.style={
		draw=none,
		every to/.append style={
			edge node={node [sloped, allow upside down, auto=false]{$\subset$}}}
	}
}
\tikzset{
	labl/.style={anchor=south, rotate=90, inner sep=.50mm}
}
\newcommand{\erre}{\mathds{R}}
\newcommand{\cinf}{C^{\infty}(M)}
\newcommand{\partderf}[2]{\frac{\partial {#1}}{\partial {#2}}}
\newcommand{\christ}{\Gamma_{ij}^k}
\newcommand{\riem}{\operatorname{Riem}}
\newcommand{\ricc}{\operatorname{Ric}}
\newcommand{\weyl}{\operatorname{W}}
\newcommand{\hess}{\operatorname{Hess}}
\newcommand{\KN}{\mathbin{\bigcirc\mspace{-15mu}\wedge\mspace{3mu}}}
\newcommand{\ra}{\rightarrow}
\newcommand{\set}[1]{{\left\{#1\right\}}}               
\newcommand{\pa}[1]{{\left(#1\right)}}                  
\newcommand{\sq}[1]{{\left[#1\right]}}                  
\newcommand{\abs}[1]{{\left|#1\right|}}                 
\newcommand{\eps}{\varepsilon}                           
\newcommand{\ol}[1]{\overline{#1}}
\renewcommand{\tilde}[1]{\widetilde{#1}}
\newcommand{\ric}{\mathrm{Ric}}
\newcommand{\bach}{\operatorname{B}}
\def\gt{\widetilde{g}}
\def\gb{\overline{g}}
\newtheorem{theorem}{\textbf{Theorem}}[section]
\newtheorem{lemma}[theorem]{\textbf{Lemma}}
\newtheorem{cor}[theorem]{\textbf{Corollary}}
\theoremstyle{remark}
\newtheorem{rem}[theorem]{\textbf{Remark}}
\numberwithin{equation}{section}
\title[]
{Bach-pinched metrics on closed manifolds}
\keywords{}
\subjclass[2010]{}
\begin{document}
	\maketitle
	
	\begin{center}
		\textsc{\textmd{L. Branca \footnote{Universit\`{a} degli Studi di Milano, Italy.
					Email: letizia.branca@unimi.it.}, G. Catino \footnote{Politecnico di Milano, Italy.
					Email: giovanni.catino@polimi.it.}, D. Dameno \footnote{Universit\`{a} degli Studi di Torino, Italy.
					Email: davide.dameno@unito.it.}.}}
	\end{center}
	\begin{abstract}
		Exploiting the deformation method introduced by Aubin in his seminal work to construct 
		constant negative scalar curvature metrics, we show the existence, 
		on every closed manifold of dimension four, of a metric whose 
		Bach tensor is pinched by the scalar curvature. 
	\end{abstract}

\
	
	\section{Introduction}
	Let $(M,g)$ be a Riemannian manifold of dimension $n\geq3$: it is well known that the Riemann curvature tensor $\operatorname{Riem}_g$ admits the
	decomposition
	\[
	\operatorname{Riem}_g=\weyl_g+\dfrac{1}{2(n-1)}
	\ricc_g\KN g
	-\dfrac{S_g}{2(n-1)(n-2)}g\KN g,
	\]
	where $\weyl_g$ is the \emph{Weyl tensor},
	$\ricc_g$ is the \emph{Ricci tensor} and
	$S_g$ is the \emph{scalar curvature} and $\KN$ denotes the
	Kulkarni-Nomizu product. A fundamental question in Riemannian Geometry  is to understand the relations between the curvature and the topology of the underlying manifold: for instance, an example of this relation is provided by metrics with {\em positive scalar curvature} \cite{GromovLawson1,GromovLawson2,LeBrunKdim,SchoenYau}, {\em non-positive sectional curvature} or by metrics which are {\em locally conformally flat}, i.e. $\weyl_g\equiv0$ for $n\geq4$ \cite{Avez,Goldman,Kuiper}.\\
	On the other hand, there are examples of curvature conditions that are unobstructed: in \cite{Aubin1,Aubin2} Aubin showed that, on every smooth $n$-dimensional closed (compact with empty boundary) manifold, there exists a smooth Riemannian metric with {\em constant negative scalar curvature}. This result was then extended to complete non-compact manifolds by  Bland and Kalka in \cite{BlandKalka}. In particular, there are no topological obstructions to metrics with negative scalar curvature and, more in general, Lohkamp proved that on every smooth complete Riemannian manifold there are no obstructions to the existence of metrics with {\em negative Ricci curvature} \cite{Lohkamp}.\\
	Note that in \cite{Aubin2} Aubin also proved that, if $M$ is a closed Riemannian manifold of dimension $n\geq4$, then there always exists a metric with non-vanishing Weyl curvature, i.e. $\abs{\weyl_g}_g>0$ everywhere. As a consequence, in \cite{CatinoMastroliaMonticelliPunzo} the authors showed the existence of \emph{weak harmonic Weyl} metrics on every closed Riemannian four-manifold, i.e. critical points, in a conformal class, of the normalized $L^2$-norm of the Cotton tensor. Moreover, in \cite{Catino} the second author extended Aubin's construction of metrics with negative scalar curvature proving that every $n$-dimensional closed manifold admits a Riemannian metric with {\em constant negative scalar-Weyl curvature}, i.e.
	\[S_g+t\abs{\weyl_g}_g\equiv-1,\]
	for every $t\in\erre$ and, as a consequence, there are no topological obstructions to the existence of metrics with negative scalar curvature and {\em Weyl-pinched curvature}, generalizing a result of Seshadri in dimension four \cite{Seshadri}: namely, on every closed manifold of dimension $n\geq4$, for every $\eps>0$ there exists a Riemannian metric $g=g_{\eps}$ such that
	\begin{align*}
		S_g<0\quad\text{and }\abs{\weyl_g}_g^2<\eps S_g^2\text{ on }M.
	\end{align*}
	In this paper, we are interested in the Bach tensor $\bach_g$, 
	which, if $n=4$, is locally defined as
	\[B_{ij}=W_{ikjl,lk}+\frac{1}{2}R_{kl}W_{ikjl}.\]
	This geometric quantity, introduced by Bach in the
	context of Conformal Relativity \cite{Bach1921}, is a divergence-free, conformally 
	covariant tensor, i.e., if $\tilde{g}=e^{2u}g$ is a metric conformal to 
	$g$, $\bach_g$ satisfies \cite{derd}
	\begin{equation} \label{bachconfcov}
	\bach_{\tilde{g}}=e^{-2u}\bach_g.
	\end{equation}
	We say that a Riemannian metric $g$ is \emph{Bach-flat} if $\bach_g\equiv 0$ on $M$: typical examples are provided by conformally
	Einstein metrics, locally conformally flat metrics, and, more generally, half conformally flat metrics.
	Furthermore, Bach-flat metrics are exactly the 
	critical points of the so-called Weyl functional
	\[\mathfrak{W}(g)=\int_M\abs{\weyl_g}_g^2dV_g.\]
	
	Up to now, no topological obstructions to the
	existence of Bach-flat metrics on closed smooth four-manifolds
	are known, although
	some partial rigidity and classification results have been
	proven \cite{mantegazza, caochen, changguryang, lebrunbach, tianviac2, tianviac}: we recall that one of the few examples of non-trivial
	Bach-flat metrics was constructed by Abbena, Garbiero and Salamon on
	a solvable Lie group \cite{abbena}. In \cite{gurskyviac}, Gursky
	and Viaclovsky developed a gluing method to construct 
	$B^t$-flat metrics, i.e. critical points of the
	Weyl functional perturbed with a quadratic scalar curvature term,
	on connected sums of Einstein four-manifolds.
	On the other hand, it is possible
	to find unobstructed metrics whose Bach tensor satisfies
	curvature properties: indeed, 
	using a special metric deformation introduced by Aubin \cite{Aubin2}, 
	the second and the third author, together with P. Mastrolia,
	showed the existence of metrics with non-vanishing Bach tensor 
	on every four-dimensional closed manifold \cite{CDM}.
	
	Our main result is the following
	
	\begin{theorem}\label{t-main} On every smooth $4$-dimensional closed manifold $M$, for every $t\in\RR$, there exists a smooth Riemannian metric $g=g_{t}$ with 
		$$
		S_g+t|\bach_g|^{\frac{1}{2}}_g\equiv -1\quad\text{on } M.
		$$
		In particular, there are no topological obstructions for negative scalar-Bach curvature metrics.
	\end{theorem}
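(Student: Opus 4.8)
The plan is to adapt Aubin's deformation method exactly as in the earlier work of Catino on scalar-Weyl curvature \cite{Catino} and of Branca--Catino--Dameno--Mastrolia on non-vanishing Bach tensor \cite{CDM}. Start from an arbitrary smooth metric $g_0$ on the closed four-manifold $M$. By Yamabe-type arguments (or Aubin's original construction), one may assume after a conformal change that $S_{g_0}$ is a negative constant, or more flexibly, work inside a fixed conformal class. The strategy is to look for the desired metric in the conformal class of $g_0$, writing $g = u^{2}g_0$ (or $g=e^{2w}g_0$) for a positive function $u$, and to exploit the conformal covariance \eqref{bachconfcov} of the Bach tensor: since $\bach_{g}=u^{-2}\bach_{g_0}$ in dimension four (reading $e^{2w}=u^2$), the full-norm scales as $|\bach_g|_g = u^{-1}|\bach_{g_0}|_{g_0}\cdot u^{-2}$... more precisely $|\bach_g|_g^2 = g^{ia}g^{jb}g^{kc}g^{ld}B_{ijkl}B_{abcd}$ picks up $u^{-8}$ from the four inverse metrics and $u^{-4}$ from $(\bach_g)_{ijkl}=u^{-2}(\bach_{g_0})_{ijkl}$, giving $|\bach_g|_g = u^{-6}|\bach_{g_0}|_{g_0}$, hence $|\bach_g|_g^{1/2} = u^{-3}|\bach_{g_0}|_{g_0}^{1/2}$. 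The point is that $|\bach_g|_g^{1/2}$ has the \emph{same conformal weight} as $S_g$ transforms under — recall the conformal scalar curvature law in dimension four, $S_{u^{2}g_0}= u^{-3}\bigl(-6\,\Delta_{g_0}u + S_{g_0}u\bigr)$ — so the quantity $S_g + t|\bach_g|_g^{1/2}$ transforms homogeneously of weight $u^{-3}$.

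**Consequently**, the equation $S_g + t|\bach_g|_g^{1/2} \equiv -1$ becomes, upon setting $g=u^{2}g_0$,
\[
-6\,\Delta_{g_0}u + S_{g_0}\,u + t\,|\bach_{g_0}|_{g_0}^{1/2} = -\,u^{3}\qquad\text{on }M,
\]
a semilinear elliptic PDE for the positive unknown $u$. This is precisely the Yamabe-type equation with an extra prescribed lower-order term $t\,|\bach_{g_0}|_{g_0}^{1/2}\geq 0$; the $|\bach_{g_0}|_{g_0}^{1/2}$ coefficient is continuous but only Hölder (not smooth) where $\bach_{g_0}$ vanishes, which is the usual mild regularity nuisance handled exactly as in \cite{Catino,CDM}. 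First I would solve this equation by the direct method / sub- and super-solution method: using Aubin's trick one can arrange $S_{g_0}<0$ (after a preliminary conformal change), and then large constants $u\equiv C$ and small constants $u\equiv c$ furnish, respectively, a super- and a sub-solution once $C,c$ are chosen depending on $\max|S_{g_0}|$, $\min|S_{g_0}|$ and $t\,\max|\bach_{g_0}|_{g_0}^{1/2}$; monotone iteration then produces a solution $u$ with $0<c\le u\le C$. Elliptic regularity (Schauder up to the Hölder threshold, which suffices since the right-hand side is then $C^{0,\alpha}$) gives $u\in C^{2,\alpha}$, hence $g=u^2g_0$ is a genuine metric of class $C^{2,\alpha}$; a bootstrap on the smooth part and an approximation argument on the zero set of $\bach_{g_0}$ upgrade $g$ to the claimed smoothness, again following \cite{Catino,CDM} verbatim.

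**The main obstacle** I expect is \emph{not} the existence of the PDE solution — that is now routine — but rather the regularity at points where $\bach_{g_0}\equiv 0$, where the prescribed term $|\bach_{g_0}|_{g_0}^{1/2}$ is merely Lipschitz-or-worse and genuinely fails to be $C^\infty$, so that one cannot expect $g$ to be smooth \emph{a priori} from elliptic theory alone; the resolution, exactly as in the scalar-Weyl case of \cite{Catino} and the Bach case of \cite{CDM}, is to first use the result of \cite{CDM} (or of Aubin \cite{Aubin2}) to deform $g_0$ to a metric with $|\bach_{g_0}|_{g_0}>0$ \emph{everywhere} — or, if a fully smooth conclusion is wanted, to note $t|\bach_g|_g^{1/2}$ is smooth wherever $\bach_g\neq0$ and handle the (generically empty, and after the preliminary deformation truly empty) bad set separately. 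Once $|\bach_{g_0}|_{g_0}$ is pointwise positive, $|\bach_{g_0}|_{g_0}^{1/2}$ is smooth, the right-hand side of the PDE is smooth in $u$, standard bootstrapping gives $u\in C^\infty$, and the resulting metric $g=g_t=u^2 g_0$ satisfies $S_g+t|\bach_g|_g^{1/2}\equiv-1$ with $S_g<0$, proving Theorem~\ref{t-main}. The remaining bookkeeping — uniform bounds on $c,C$ as $t$ ranges over a compact set, and continuity of $g_t$ in $t$ — is elementary.
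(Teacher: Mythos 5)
There is a genuine gap in your proposal, and it is the heart of the matter: you cannot work inside a fixed conformal class. You write that ``using Aubin's trick one can arrange $S_{g_0}<0$ (after a preliminary conformal change),'' and you then solve the conformal Yamabe-type equation by sub- and super-solutions. But Aubin's construction of negative-scalar-curvature metrics is \emph{not} a conformal change --- it is the metric deformation $g\mapsto g+df\otimes df$ of Section \ref{sec-3}, and it changes the conformal class. Staying in a fixed conformal class cannot work: on $S^4$ (or any closed four-manifold with a conformal class of positive Yamabe invariant) the first eigenvalue of the conformal Laplacian $-6\Delta_g+S_g$ is positive, hence no metric in that class has $S<0$. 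Worse, for $t>0$ the additional term $t|\bach_g|_g^{1/2}\geq 0$ can only push the first eigenvalue of $\mathscr{L}^t_g=-6\Delta_g+F^B_g$ upward, so by Lemma~\ref{l-g1} and the conformal invariance of the sign of this eigenvalue, \emph{no} conformal metric can have $F^B<0$, let alone $F^B\equiv-1$. Your constant sub- and super-solutions therefore do not exist in general, and the PDE $\mathscr{L}^t_{g_0}u=-u^3$ is unsolvable within the original conformal class. The paper's entire Steps 2--5 of Section~\ref{proofth}, plus the delicate estimates of Section~\ref{app} on how each piece of the Bach tensor behaves under Aubin's deformation $g''=g'+d(k\psi)\otimes d(k\psi)$, are precisely what is needed to escape the conformal class and produce a (necessarily non-conformal) metric $g''$ whose conformal invariant $\widehat{Y}^B(M,[g''])$ is negative. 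Only then do the conformal lemmas (Lemma~\ref{l-g2}, Lemma~\ref{l-a1}, Lemma~\ref{l-a2}) apply. Your proposal has no counterpart to this non-conformal step, which is the part that carries the actual content of the theorem; what you do reproduce correctly (the reduction to smoothness on $\{\bach_g=0\}$ via \cite{CDM}, and the final elliptic regularity) is exactly the ``easy'' part.

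A secondary but real error: the Bach tensor is a $(0,2)$-tensor, not a $(0,4)$-tensor. With $\tilde{g}=u^2g$, equation~\eqref{bachconfcov} gives $(\bach_{\tilde{g}})_{ij}=u^{-2}(\bach_g)_{ij}$, so $|\bach_{\tilde{g}}|^2_{\tilde{g}}=\tilde{g}^{ia}\tilde{g}^{jb}(\bach_{\tilde{g}})_{ij}(\bach_{\tilde{g}})_{ab}=u^{-8}|\bach_g|^2_g$, hence $|\bach_{\tilde{g}}|^{1/2}_{\tilde{g}}=u^{-2}|\bach_g|^{1/2}_g$, not $u^{-3}|\bach_g|^{1/2}_g$. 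Consequently the correct conformal PDE is
\begin{equation*}
-6\Delta_{g_0}u+\bigl(S_{g_0}+t|\bach_{g_0}|^{1/2}_{g_0}\bigr)u=-u^3,
\end{equation*}
i.e.\ $\mathscr{L}^t_{g_0}u=-u^3$: the Bach term also multiplies $u$. This is what makes $F^B_g=S_g+t|\bach_g|^{1/2}_g$ behave exactly like a scalar curvature for the modified conformal Laplacian $\mathscr{L}^t_g$ and is why the Yamabe machinery of Section~2 transfers verbatim. With your version of the equation this structure is destroyed.
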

	Therefore, choosing $t=1/\sqrt{\eps}$, $\eps>0$,  in Theorem \ref{t-main} we obtain the following existence result for metrics with negative scalar curvature and {\em Bach-pinched curvature}:	
	\begin{cor} On every smooth $n$-dimensional closed manifold, for every $\varepsilon>0$, there exists a smooth Riemannian metric $g=g_{\varepsilon}$ with 
		$$
		S_g<0 \quad\text{and}\quad |\bach_g|_g <\varepsilon S_g^2\quad\text{on } M.
		$$
	\end{cor}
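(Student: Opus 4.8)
The plan is to adapt Aubin's deformation argument, as extended in \cite{Catino} and \cite{CDM}, to the scalar--Bach functional. The starting point is the observation, due to Aubin \cite{Aubin2} and used in \cite{CDM}, that on any closed four-manifold there is a metric $g_0$ with $|\bach_{g_0}|_{g_0} > 0$ on a nonempty open set $U$ --- in fact, by the conformal covariance \eqref{bachconfcov} and Aubin's construction of metrics with $|\weyl|>0$ everywhere, one can arrange $|\bach_{g_0}|_{g_0}>0$ on all of $M$, but we only need positivity somewhere. Fix such a $g_0$. The key structural feature we exploit is that $|\bach_g|_g^{1/2}$ scales like $S_g$ under constant rescalings: if $g \mapsto \lambda^2 g$ then $S \mapsto \lambda^{-2} S$, while $\bach_g$ (a $(0,2)$-tensor built from two derivatives of curvature) has $|\bach_{\lambda^2 g}|_{\lambda^2 g} = \lambda^{-4}|\bach_g|_g$, so $|\bach_{\lambda^2 g}|_{\lambda^2 g}^{1/2} = \lambda^{-2}|\bach_g|_g^{1/2}$. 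Hence $S_g + t|\bach_g|_g^{1/2}$ is homogeneous of degree $-2$ under scaling, exactly like the scalar curvature itself --- this is precisely why the exponent $1/2$ appears, and it is the analogue of the degree-$(-2)$ homogeneity of $|\weyl_g|_g$ used in \cite{Catino}.

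The core of the argument is a localized deformation. First I would work in a small coordinate ball $B$ inside $U$ and show that, by a conformal change $g = e^{2u} g_0$ supported near $B$, one can make the quantity $f_g := S_g + t|\bach_g|_g^{1/2}$ negative and, crucially, strictly controllable: because $\bach_g$ is conformally covariant, $|\bach_g|_g^{1/2} = e^{-u}|\bach_{g_0}|_{g_0}^{1/2}$, which is explicit and does not involve derivatives of $u$, while $S_g$ under the conformal change satisfies the usual equation $S_g = e^{-2u}(S_{g_0} + 6\Delta_{g_0} u - 6|\nabla u|_{g_0}^2)$ in dimension four. So $f_g = e^{-2u}\bigl(S_{g_0} + 6\Delta_{g_0}u - 6|\nabla u|^2 + t e^{u}|\bach_{g_0}|_{g_0}^{1/2}\bigr)$. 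The task is then to solve, on $M$, the prescribed-curvature type equation $S_{g_0} + 6\Delta_{g_0}u - 6|\nabla u|^2 + t e^{u}|\bach_{g_0}|_{g_0}^{1/2} = -e^{2u}$, i.e. to realize $f_g \equiv -1$. Following Aubin's scheme, one does this in two stages: a local stage where a cutoff deformation produces a metric $g_1$ with $f_{g_1} < 0$ everywhere (using a large negative conformal factor near $B$ to dominate the positive term $t e^u |\bach|^{1/2}$ when $t>0$, and a suitable choice when $t<0$; the case $t=0$ is Aubin's theorem), followed by a global stage solving the Yamabe-type equation $6\Delta_{g_0}v - 6|\nabla v|^2 + (\text{lower order}) = -e^{2v}$ by the method of sub- and supersolutions or by a direct variational/continuity argument, exactly as in \cite{Catino,CDM}. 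The presence of the extra term $t e^{u}|\bach_{g_0}|_{g_0}^{1/2}$, being a bounded zeroth-order perturbation with a favorable sign structure after the first stage, does not obstruct the sub/supersolution scheme.

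The main obstacle I anticipate is the first stage: ensuring that after the localized deformation one genuinely has $f_{g_1}<0$ on all of $M$, including on the transition annulus of the cutoff, while simultaneously keeping enough control to run the global existence step. Off the support of the deformation, $g_1 = g_0$ and $f_{g_1} = f_{g_0}$ need not be negative, so --- as in \cite{CDM} --- one first applies Aubin's construction to pass to a background metric of constant negative scalar curvature $S \equiv -1$ with $|\bach|$ small (or with $\bach$ supported where convenient), making the zeroth-order term $t|\bach|^{1/2}$ a small perturbation of $-1$ and hence keeping $f < 0$; the delicate point is that the conformal change used to concentrate $\bach$ must be compatible with the subsequent cutoff deformation, so one needs to track the interplay of the two conformal factors on the annular region. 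Once $f_{g_1}<0$ globally is secured, the remaining PDE step is standard: the operator $v \mapsto 6\Delta_{g_0} v - 6|\nabla v|^2$ together with the negativity of the zeroth-order coefficient guarantees, via Aubin's argument, a unique smooth solution, yielding $g = g_t$ with $S_g + t|\bach_g|_g^{1/2} \equiv -1$. The corollary then follows by the scaling-homogeneity remark above, taking $t = 1/\sqrt{\eps}$ and noting $|\bach_g|_g < \eps S_g^2$ is equivalent to $|\bach_g|_g^{1/2} < \sqrt{\eps}\,|S_g|$, which holds strictly once $S_g + t|\bach_g|_g^{1/2} = -1$ forces $t|\bach_g|_g^{1/2} < |S_g|$.
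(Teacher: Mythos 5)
Your final step --- deriving the Corollary from Theorem~\ref{t-main} by taking $t=1/\sqrt{\eps}$ --- is correct and is exactly what the paper does: from $S_g+\tfrac{1}{\sqrt{\eps}}|\bach_g|_g^{1/2}\equiv -1$ one reads off $S_g\le -1<0$ (since the Bach term is nonnegative) and then $\tfrac{1}{\sqrt{\eps}}|\bach_g|_g^{1/2}=-1-S_g=|S_g|-1<|S_g|$, hence $|\bach_g|_g<\eps S_g^2$. However, most of your text is a sketch of Theorem~\ref{t-main} itself, and there the plan has genuine gaps. The strategy cannot run inside a single conformal class: since $\bach_{e^{2u}g}=e^{-2u}\bach_g$, a conformal change cannot alter the zero set of the Bach tensor, and the modified Yamabe quantity $\widehat{Y}^B(M,[g])$ is determined by the conformal class, so $\int_M F^B_{\gt}\,dV_{\gt}$ keeps its sign for every $\gt\in[g]$. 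In particular the claim that ``by the conformal covariance and Aubin's construction of metrics with $|\weyl|>0$ everywhere, one can arrange $|\bach_{g_0}|_{g_0}>0$'' is false: self-dual metrics have $\weyl\neq 0$ but $\bach\equiv 0$. The paper instead takes the existence of $g_0$ with $|\bach_{g_0}|>0$ from \cite{CDM}, which relies on Aubin's \emph{non-conformal} deformation $g\mapsto g+df\otimes df$, and the ``first stage'' of your plan must likewise exit the conformal class --- the paper does so with $g''=\psi g+d(k\psi)\otimes d(k\psi)$, not with a conformal cutoff.

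The second, more serious omission concerns regularity. Lemma~\ref{l-g2} requires $|\bach_g|>0$ \emph{everywhere} so that $F^B_g$ is smooth and elliptic regularity produces a $C^\infty$ conformal factor; at zeros of $\bach_g$ the function $|\bach_g|_g^{1/2}$ is typically not smooth. Your plan treats $te^u|\bach_{g_0}|^{1/2}$ as ``a bounded zeroth-order perturbation'' in a sub/super-solution scheme but has no mechanism to guarantee the deformed metric retains $|\bach|>0$. The paper ensures this by a genericity argument: after the deformation, the vanishing of $\ol{B}_{\alpha\beta}$ at a point is a polynomial condition in the real parameter $k$, so $k$ may be chosen outside a finite set. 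Moreover, the real technical content of the theorem is the estimate of Section~\ref{app}, namely $|\bach_{\gb}|_{\gb}^{1/2}\le C\bigl(1+(r-\rho)^{-1/2}+k^{-1}\Theta\bigr)$ on the deformation ball --- integrable but not uniformly bounded --- which is what allows the Bach contribution to $\Phi_{B_r}$ to be absorbed into the Aubin-type negative term. No argument of that kind appears in your plan, and without it the sign of $\Phi_{B_r}$ cannot be controlled.
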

	The naive idea of the proof of Theorem \ref{t-main}, similar
	to the one exploited in \cite{Aubin1, Aubin2, Catino}, would be 
	to start from a reference metric $g$, to be suitably chosen, 
	to construct a non-conformal Riemannian metric $\gb$
	such that 
	\[\int_M \pa{S_{\gb}+t\abs{\bach_{\gb}}_{\gb}^{\frac{1}{2}}}dV_{\gb}<0\] 
	and to apply the
	method used by
	Gursky in \cite{gur1} to produce the desired constant negative scalar-Bach metric.
	Since we must deal with the modified conformal Laplacian of $g$
	\[
	-6\Delta_g+S_g+t\abs{\bach_g}_g^{\frac{1}{2}},
	\]
	to construct smooth (at least, $C^4$) metrics
	we have to take into account the lack of regularity of the operator
	on $\set{\bach_g=0}$: to overcome this difficulty, throughout the proof we often rely on
	the existence of Riemannian metrics with non-vanishing
	Bach tensor \cite{CDM} (see Section \ref{proofth} for further details).
	We also want to stress out the fact that, in order to have precise
	estimates, we had to compute
	the full variation formula of the Bach tensor under Aubin's
	deformation (see Section \ref{app} for all the detailed computations).

\

	\section{The scalar-Bach curvature}
	In this section we focus on the variational and conformal aspects of the scalar-Bach curvature, which are analogous to those of the scalar-Weyl curvature, first studied by Gursky in \cite{gur1}. Let $(M, g)$ be a $n$-dimensional closed (compact with empty boundary) Riemannian manifold. We start by recalling the definition of the conformal Laplacian is the operator:
	$$
	\mathcal{L}_g:=-\frac{4(n-1)}{n-2}\Delta_g+S_g.
	$$
	It satisfies the following well known conformal covariance property: if $\gt=u^{4/(n-2)} g$, where
	$u$ is a positive smooth function on $M$, then 
	$$
	\mathcal{L}_{\gt}\phi = u^{-\frac{n+2}{n-2}}\mathcal{L}_g(\phi u),\quad \forall \phi\in C^2(M).
	$$
	Observe that this operator plays a prominent role in the resolution of the Yamabe variational problem: indeed, the scalar curvature of the conformally related metric $\gt$ is given by
	$$
	S_{\gt}=u^{-\frac{n+2}{n-2}} \mathcal{L}_g u.
	$$
	In \cite{gur1}, Gursky introduced a modification of the conformal Laplacian, introducing a new term depending on the Weyl curvature. Given $t\in\RR$, we recall the definition of scalar-Weyl curvature
	\begin{equation}\label{scalar-weyl}
		F_g:= S_g + t \abs{\weyl_g}_g
	\end{equation}
	and the associated modified conformal Laplacian
	$$
	\mathcal{L}^t_g:=-\frac{4(n-1)}{n-2}\Delta_g+F_g,
	$$
	where
	$$\abs{\weyl_g}_g=\sqrt{W_{ijkl}W_{pqrs}g^{ip}g^{jq}g^{kr}g^{ls}}$$
	denotes the norm of the Weyl curvature of $g$.	
	It was proved in \cite{gur1} that the pairs $(F_g, \mathcal{L}^t_g)$ and $(S_g, \mathcal{L}_g)$ share the same conformal properties. In fact, if $\gt=u^{4/(n-2)} g$, then
	\begin{equation}\label{e-confFWeyl}
		\mathcal{L}^t_{\gt}\phi = u^{-\frac{n+2}{n-2}}\mathcal{L}^t_g(\phi u),\quad \forall \phi\in C^2(M),\quad\text{and}\quad F_{\gt}=u^{-\frac{n+2}{n-2}} \mathcal{L}^t_g u.
	\end{equation}
	In an analogous way, for $n=4$, given $t\in\RR$ we define the \emph{scalar-Bach curvature}
		\begin{equation}\label{scalar-bach}
			F_g^B:= S_g + t \abs{\bach_g}^{\frac{1}{2}}_g
		\end{equation}
	and the associated modified conformal Laplacian 
		$$
		\mathscr{L}^t_g:=-6\Delta_g+F^B_g,
		$$
	where $\abs{\bach_g}_g=\sqrt{B_{ij}B_{pq}g^{ip}g^{jq}}$ denotes the norm of the Bach tensor of $g$. A crucial observation is the fact that the pair $(F^B_g,\mathscr{L}_g^t)$ preserves the same conformal properties of $(S_g,\mathcal{L})$; indeed, let $\gt=u^{2}g$, then 
	$$u^{2}(\bach_{\tilde{g}})_{ij}=(\bach_g)_{ij}$$
	and 
	$$u^{8}\abs{\bach_{\tilde{g}}}_{\tilde{g}}^2=\abs{\bach_g}_g^2,$$
	therefore
	\begin{equation}\label{e-confF}
		\mathscr{L}^t_{\gt}\phi = u^{-3}\mathscr{L}^t_g(\phi u),\quad \forall \phi\in C^4(M),\quad\text{and}\quad F^B_{\gt}=u^{-3} \mathscr{L}^t_g u.
	\end{equation}
	In particular, adapting the argument of \cite[Proposition 3.2]{gur1}, we have the following: 
	\begin{lemma}\label{l-g1} Let $(M,g)$ be a $4$-dimensional closed Riemannian manifold with $\abs{\bach_g}_g>0$. Then, there exists  a smooth metric $\gt\in[g]$ with either $F^B_{\gt}>0$, $F^B_{\gt}<0$, or $F^B_{\gt}\equiv 0$. Moreover, these three possibilities are mutually exclusive.
	\end{lemma}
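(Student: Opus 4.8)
The plan is to adapt the spectral argument behind \cite[Proposition 3.2]{gur1} to the modified operator $\mathscr{L}^t_g=-6\Delta_g+F^B_g$. First I would check that the hypothesis $\abs{\bach_g}_g>0$ makes $F^B_g=S_g+t\abs{\bach_g}_g^{1/2}$ a \emph{smooth} function on $M$: the quantity $\abs{\bach_g}_g^2=B_{ij}B_{pq}g^{ip}g^{jq}$ is smooth and, being strictly positive on the compact manifold $M$, is bounded away from $0$, so $\abs{\bach_g}_g^{1/2}=\pa{\abs{\bach_g}_g^2}^{1/4}$ is smooth as well; this is precisely the role of the assumption, since $\abs{\bach_g}_g^{1/2}$ fails to be smooth exactly on the zero set of $\bach_g$. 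Consequently $\mathscr{L}^t_g$ is a self-adjoint, second order elliptic operator on $L^2(M,dV_g)$ with smooth potential, hence it has discrete spectrum; let $\lambda_1=\lambda_1(\mathscr{L}^t_g)$ be its lowest eigenvalue,
\[
\lambda_1=\inf_{0\neq\phi\in C^\infty(M)}\frac{\int_M\pa{6\abs{\nabla\phi}^2_g+F^B_g\,\phi^2}\,dV_g}{\int_M\phi^2\,dV_g}.
\]
By standard elliptic theory together with the strong maximum principle, the infimum is attained by a smooth eigenfunction $u_1>0$ on $M$ solving $\mathscr{L}^t_g u_1=\lambda_1 u_1$.

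For the existence part I would then set $\gt:=u_1^{2}g\in[g]$, a smooth metric since $u_1$ is smooth and positive. By the conformal identity \eqref{e-confF},
\[
F^B_{\gt}=u_1^{-3}\,\mathscr{L}^t_g u_1=\lambda_1\,u_1^{-2}\qquad\text{on }M,
\]
so, $u_1$ being positive, $F^B_{\gt}$ is everywhere of the same sign as $\lambda_1$: positive if $\lambda_1>0$, negative if $\lambda_1<0$, and identically zero if $\lambda_1=0$. This produces a metric in $[g]$ realizing one of the three alternatives.

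To obtain mutual exclusivity, I would show that each sign condition, realized by any metric in $[g]$, determines the sign of $\lambda_1$. If $\hat g=w^{2}g\in[g]$ with $w>0$ smooth satisfies $F^B_{\hat g}>0$, then \eqref{e-confF} gives $\mathscr{L}^t_g w>0$ pointwise on $M$; pairing with $u_1$ and using self-adjointness of $\mathscr{L}^t_g$ on $L^2(M,dV_g)$,
\[
0<\int_M u_1\,\mathscr{L}^t_g w\,dV_g=\int_M w\,\mathscr{L}^t_g u_1\,dV_g=\lambda_1\int_M w\,u_1\,dV_g,
\]
and since $\int_M w\,u_1\,dV_g>0$ this forces $\lambda_1>0$. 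Likewise $F^B_{\hat g}<0$ for some $\hat g\in[g]$ forces $\lambda_1<0$, and $F^B_{\hat g}\equiv0$ forces $\lambda_1=0$. Since $\lambda_1$ is a single real number, at most one of the three cases can occur among metrics conformal to $g$; together with the previous paragraph, exactly one does, which is the assertion.

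The scheme is the familiar one for the conformal Laplacian and its Weyl-modified analogue, so I do not anticipate real difficulties; the one point that genuinely needs care — and the reason the hypothesis $\abs{\bach_g}_g>0$ is assumed — is the smoothness of the potential $F^B_g$, which in turn guarantees that $\mathscr{L}^t_g$ is a well-behaved elliptic operator and that its first eigenfunction can be taken smooth and strictly positive.
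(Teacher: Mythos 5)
Your proof is correct and follows essentially the same route as the paper: take the first eigenfunction of $\mathscr{L}^t_g$ (smooth and positive thanks to $\abs{\bach_g}_g>0$), rescale conformally by its square, and read off the sign of $F^B$ from the sign of the first eigenvalue, with mutual exclusivity coming from the conformal invariance of that sign. The only difference is cosmetic: you spell out, via the self-adjoint pairing with $u_1$, why the sign of the first eigenvalue is a conformal invariant, whereas the paper simply invokes this fact.
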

	\begin{proof}
		Let $\mu_t(g)$ denote the principle eigenvalue of $\mathscr{L}^t_g$ and let $\phi$ denote the eigenfunction relative to $\mu_t(g)$. By the maximum principle $\phi$ can be assumed to be positive. In particular $\phi$ satisfies
		$$
		\mathscr{L}^t_g\phi=\mu_t(g)\phi,
		$$
		that is equivalent to
		$$
		-6\Delta\phi=-F_g^B\phi+\mu_t(g)\phi.
		$$
		Note that, since $\abs{\bach_g}_g> 0$, then $F_g^B\in C^{\infty}(M)$ and thus $\phi\in C^{\infty}(M)$. 
		Let us consider the conformal change $\gt=\phi^2g$, then $\gt\in[g]$ is smooth and by
		\eqref{e-confF}
		$$
		F_{\gt}^B=\mu_t(g)\phi^{-2}.
		$$ 
		Therefore, $F^B_g$ is either positive, negative or identically zero, depending on the sign of $\mu_t(g)$ and these possibilities are mutually exclusive because the sign of $\mu_t(g)$ is conformally invariant.
	\end{proof}
	\noindent
	In analogy with the Yamabe problem, Gursky introduced the following modified fuctional
	$$
	\widehat{Y}(u):=\frac{\int_M u\, \mathcal{L}^t_g u\,dV_g}{\left(\int_M u^{2n/(n-2)}\,dV_g\right)^{(n-2)/2}}
	$$
	and 
	$$
	\widehat{Y}(M,[g]):=\inf_{u\in H^1(M)} \widehat{Y}(u),
	$$
	which is conformally  invariant.
	Following a classical subcritical regularization argument, he proved that, if $\widehat{Y}(M,[g])\leq 0$, then the variational problem of finding a conformal metric $\gt\in[g]$ with constant scalar-Weyl curvature $F$ can be solved. See \cite[Proposition 3.5]{gur1} for the proof (in dimension four). In an analogous way, when $n=4$ and $\abs{\bach_g}_g> 0$, we can consider the functional   
	$$
	\widehat{Y}^B(u):=\frac{\int_M u\, \mathscr{L}^t_g u\,dV_g}{\left(\int_M u^{4}\,dV_g\right)}
	$$
	and the conformal invariant
	$$
	\widehat{Y}^B(M,[g]):=\inf_{u\in H^1(M)} \widehat{Y}^B(u).
	$$
	By \eqref{e-confF}, it is easy to see that the functional $u\mapsto\widehat{Y}^B(u)$ is equivalent to the modified Einstein-Hilbert functional
	$$
	\gt=u^{2}g \longmapsto \frac{\int_M F^B_{\gt}\,dV_{\gt}}{\text{Vol}_{\gt}(M)}.
	$$
	In particular the following lemma holds:	
	\begin{lemma}\label{l-g2} Let $(M,g)$ be a $4$-dimensional closed Riemannian manifold with $\abs{\bach_g}_g>0$. If there exists a metric $g'\in[g]$ such that
		$$
		\int_M F_{g'}\,dV_{g'} <0,
		$$
		then, there exists a (unique) $C^{\infty}$ metric $\gt\in[g]$ such that $F_{\gt}\equiv -1$.
	\end{lemma}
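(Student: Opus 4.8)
The strategy is the classical subcritical regularization used by Gursky \cite[Proposition 3.5]{gur1} for the scalar--Weyl curvature. First, the standing assumption $\abs{\bach_g}_g>0$ guarantees that $F^B_g=S_g+t\abs{\bach_g}_g^{1/2}$ is a \emph{smooth} function, so $\mathscr{L}^t_g=-6\Delta_g+F^B_g$ is a non-degenerate elliptic operator with smooth coefficients, which is what will eventually yield a $C^\infty$ solution. Writing the given metric as $g'=u_0^2g$ with $u_0\in C^\infty(M)$, $u_0>0$, identity \eqref{e-confF} gives $\int_M F^B_{g'}\,dV_{g'}=\int_M u_0\,\mathscr{L}^t_g u_0\,dV_g=\int_M\pa{6\abs{\nabla u_0}^2+F^B_g u_0^2}\,dV_g$ and $\text{Vol}_{g'}(M)=\int_M u_0^4\,dV_g$, so the hypothesis $\int_M F^B_{g'}\,dV_{g'}<0$ is exactly $\widehat{Y}^B(u_0)<0$, and hence $\widehat{Y}^B(M,[g])<0$. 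The plan is then to produce $u\in C^\infty(M)$, $u>0$, solving
\begin{equation}\label{l-g2-eq}
	-6\Delta_g u+F^B_g\,u=\lambda\,u^3\qquad\text{on }M
\end{equation}
for some constant $\lambda<0$; by \eqref{e-confF} the metric $u^2g$ then has $F^B_{u^2g}\equiv\lambda$, and since a constant rescaling $ch$ of any metric $h$ satisfies $S_{ch}=c^{-1}S_h$ and $\abs{\bach_{ch}}^{1/2}_{ch}=c^{-1}\abs{\bach_h}^{1/2}_h$, hence $F^B_{ch}=c^{-1}F^B_h$ for $c>0$, the metric $\gt:=\abs{\lambda}\,u^2g\in[g]$ satisfies $F^B_{\gt}\equiv-1$.

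To build such a $u$ I would, for $2<p<4$, let $\lambda_p$ be the infimum of $u\mapsto\int_M\pa{6\abs{\nabla u}^2+F^B_g u^2}\,dV_g$ over $\set{u\in H^1(M)\tc\int_M\abs{u}^p\,dV_g=1}$. Since $H^1(M)\hookrightarrow L^p(M)$ is compact in the subcritical range, the infimum is attained by some $u_p$; replacing $u_p$ by $\abs{u_p}$ and using the strong maximum principle one may take $u_p>0$, and standard elliptic regularity (here the smoothness of $F^B_g$ enters) makes $u_p$ smooth, solving $-6\Delta_g u_p+F^B_g u_p=\lambda_p u_p^{p-1}$. Testing the functional on $u_0/\norm{u_0}_{L^p}$ shows $\lambda_p\leq\int_M\pa{6\abs{\nabla u_0}^2+F^B_g u_0^2}\,dV_g\big/\norm{u_0}_{L^p}^2$, whose limit as $p\to4$ is negative; thus $\lambda_p<0$ and $\abs{\lambda_p}$ stays bounded below for $p$ near $4$. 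On the other hand $\lambda_p\geq\pa{\min_M F^B_g}\,\text{Vol}_g(M)^{1-2/p}$ by H\"older, so $\set{\lambda_p}$ is also bounded.

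The key point — and the reason no concentration can occur — is the a priori bound coming from the negativity of $\lambda_p$: at a maximum point of $u_p$ one has $-6\Delta_g u_p\geq0$, so from the equation $\lambda_p u_p^{p-1}=-6\Delta_g u_p+F^B_g u_p\geq F^B_g u_p\geq-\norm{F^B_g}_{L^\infty}u_p$ there; dividing by $u_p>0$ gives $\lambda_p u_p^{p-2}\geq-\norm{F^B_g}_{L^\infty}$ at that point, and since $\lambda_p<0$ this yields $\norm{u_p}_{L^\infty}^{p-2}\leq\norm{F^B_g}_{L^\infty}/\abs{\lambda_p}$, a bound uniform in $p$ near $4$. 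Consequently the right-hand sides $-F^B_g u_p+\lambda_p u_p^{p-1}$ are uniformly bounded in $L^\infty$, and $L^q$- together with Schauder estimates, bootstrapped using $F^B_g\in C^\infty$, give uniform $C^{k,\alpha}$ bounds for every $k$. Along a subsequence $p\to4$ one obtains $u_p\to u$ in $C^4(M)$ and $\lambda_p\to\lambda<0$, with $u\geq0$, $\int_M u^4\,dV_g=1$ (so $u\not\equiv0$), and $u$ solving \eqref{l-g2-eq}; rewriting the latter as $-6\Delta_g u+\pa{F^B_g-\lambda u^2}u=0$ and invoking the strong maximum principle forces $u>0$, hence $u\in C^\infty(M)$, and $\gt:=\abs{\lambda}\,u^2g$ is the desired metric.

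Finally, uniqueness is again a maximum-principle argument: if $\gt_1,\gt_2\in[g]$ both satisfy $F^B\equiv-1$, write $\gt_2=\phi^2\gt_1$ with $\phi\in C^\infty(M)$, $\phi>0$; then \eqref{e-confF} in the conformal class of $\gt_1$ gives $-1=F^B_{\gt_2}=\phi^{-3}\mathscr{L}^t_{\gt_1}\phi=\phi^{-3}\pa{-6\Delta_{\gt_1}\phi-\phi}$, i.e. $-6\Delta_{\gt_1}\phi=\phi-\phi^3=\phi(1-\phi)(1+\phi)$; evaluating at a maximum of $\phi$ forces $\phi\leq1$ on $M$, and at a minimum forces $\phi\geq1$ on $M$, so $\phi\equiv1$ and $\gt_1=\gt_2$. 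I expect the only genuine obstacle to be the loss of compactness of $H^1(M)\hookrightarrow L^4(M)$ at the critical exponent $4=2n/(n-2)$: the strict negativity $\widehat{Y}^B(M,[g])<0$ is precisely what makes the regularization scheme succeed, while the hypothesis $\abs{\bach_g}_g>0$ is needed to keep $F^B_g$ smooth and hence the limiting solution $C^\infty$.
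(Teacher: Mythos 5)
Your proposal is correct and follows essentially the same route as the paper, which simply invokes the classical subcritical regularization argument of \cite[Proposition 4.4]{LeeParker} (cf.\ \cite[Proposition 3.5]{gur1}) together with elliptic regularity made possible by the smoothness of $F^B_g$ when $\abs{\bach_g}_g>0$; you have merely written that argument out in full, and in addition you supply the maximum-principle proof of uniqueness that the paper asserts but does not detail. (One cosmetic caveat: the uniform $C^{k,\alpha}$ bounds on $u_p$ hold only up to the order allowed by the regularity of $t\mapsto t^{p-1}$ at $t=0$, but uniform $C^{2,\alpha}$ bounds already suffice to pass to the limit, after which the limit equation with its smooth cubic nonlinearity bootstraps to $C^\infty$.)
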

	\begin{proof}
		Since 
		$$
		\int_M F_{g'}\,dV_{g'} <0,
		$$
		arguing as in \cite[Proposition 4.4]{LeeParker}, there exists $v\in H^1(M)$ which attains the minimum of $\widehat{Y}^B(M,[g])$. 
		In particular, 
		$$
		-6\Delta v+F_g^Bv=Kv^{-3};
		$$
		where $K$ is a negative constant. Then, since $\abs{\bach_g}_g>0$, $F_g$ is smooth and, by elliptic
		regularity, we have that $v\in\cinf$ and $\gt\in[g]$ such that $\gt=v^{-2}g$ is smooth.
	\end{proof}
	Note that these techniques introduced by Gursky have been used in various contexts, such as \cite{BCDM, Catino, GurskyLeBrun1, GurskyLeBrun2, Itoh, LeBrun}.	
	
\
	
	\section{Aubin's metric deformation}\label{sec-3}
	We recall the deformation of a Riemannian metric $g$, introduced by 
	Aubin in \cite{Aubin1, Aubin2} and defined as
	\begin{equation} \label{aubindef}
		\ol{g}=g+df\otimes df, \quad{ } f\in\cinf;
	\end{equation}
	throughout this section, the barred 
	quantities are referred to the metric $\ol{g}$, while the 
	unbarred ones are related to $g$. 
	Locally, given an open chart $U\subset M$ with
	coordinate functions $dx^1,...,dx^n$, we can rewrite 
	\eqref{aubindef} as
	\begin{equation}
		\ol{g}_{ij}=g_{ij}+f_if_j,
	\end{equation}
	where $f_i=\partial_if=\partderf{f}{x^i}$. The 
	Levi-Civita connection is locally expressed by the 
	Christoffel symbols $\christ$, which, with
	respect to $\ol{g}$, are defined as
	\begin{equation} \label{christaubin}
		\ol{\Gamma}_{ij}^l=\christ+\dfrac{f^lf_{ij}}{1+\abs{\nabla f}^2},
	\end{equation}
	where $f^i=g^{ij}f_j$, $f_{ij}=\partial_jf_i-\christ f_l$
	and $\christ$ are the Christoffel symbols of the metric $g$. 
	In particular, we have
	\begin{align}\label{volume form and inverse aub def}
		dV_{\ol{g}}&=\pa{1+\abs{\nabla f}^2}^{\frac{1}{2}}dV_g;\\
		\ol{g}^{ij}&=g^{ij}-\frac{f_if_j}{1+\abs{\nabla f}^2}.\notag
	\end{align}
	Similarly, starting from 
	\eqref{christaubin}, we can compute the curvature components of the metric 
	$\ol{g}$ (see \cite[Chapter 2]{CMbook}): for instance, the local components of the $(0,4)$-Riemann tensor
	$\ol{\riem}$ are written as 
	\begin{equation} \label{riemaubin}
		\ol{R}_{ijlt}=R_{ijlt}+\dfrac{1}{1+\abs{\nabla f}^2}\pa{
			f_{il}f_{jt}-f_{it}f_{jl}}=:R_{ijkl}+E^R_{ijkl}.
	\end{equation}
	Tracing \eqref{riemaubin}, we obtain the local expressions for the 
	Ricci tensor $\ol{\ric}$ and the scalar curvature $\ol{S}$:
	\begin{align}
		\ol{R}_{ij}&=R_{ij}-\dfrac{1}{1+\abs{\nabla f}^2}f^tf^lR_{itjl}
		+\dfrac{1}{1+\abs{\nabla\phi}^2}\pa{\Delta f\cdot f_{ij}-
			f_{it}f_j^{t}}+ \label{ricciaubin}\\
		&-\dfrac{1}{\pa{1+\abs{\nabla f}^2}^2}f^tf^l
		\pa{f_{ij}f_{tl}-f_{it}f_{jl}}\notag\\
		&=:R_{ij}+F_{ij}\notag;\\
		\ol{S}&=S-\dfrac{2}{1+\abs{\nabla f}^2}R_{ij}f^if^j+
		\dfrac{1}{1+\abs{\nabla f}^2}\sq{\pa{\Delta f}^2-
			\abs{\hess f}^2}+ \label{scalaubin}\\
		&-\dfrac{2}{\pa{1+\abs{\nabla f}^2}^2}
		\sq{\Delta f\cdot f^if^jf_{ij}-
			f^if_{ij}f^{jp}f_p}\notag\\
		&=:S+H	\notag.
	\end{align}
	Note that the proof of \eqref{volume form and inverse aub def} and that of the scalar curvature can be found in \cite{Aubin2} while the other transformations can be found in \cite[Chapter 2]{CMbook}. Moreover, on a $4$-dimensional manifold, we have
	\begin{align}\label{bachdef}
		B_{ij}=&\frac{1}{2}\sq{\Delta R_{ij}-\frac{1}{3}S_{ij}+2R_{kl}R_{ikjl}
			-\frac{2}{3}SR_{ij}-\frac{1}{6}\Delta S g_{ij}-\frac{1}{2}\pa{\abs{\ric}^2-\frac{S^2}{3}}g_{ij}}.
	\end{align}
	Then
	\begin{equation}\label{bach aubin}
		\ol{B}_{ij}=B_{ij}+E(f)_{ij},
	\end{equation}
	where
	\begin{align}\label{resto bach}
		E(f)_{ij}:=&\frac{1}{2}\Bigg[\bar{\Delta} \ol{R}_{ij}-\frac{1}{3}\ol{S}_{ij}+2\ol{R}_{kl}\ol{R}_{ikjl}
			-\frac{2}{3}\ol{S}\ol{R}_{ij}-\frac{1}{6}\bar{\Delta} \ol{S} \ol{g}_{ij}-\frac{1}{2}\pa{\abs{\ol{\ric}}_{\ol{g}}^2-\frac{\ol{S}^2}{3}}\ol{g}_{ij}\\
			&-\Delta R_{ij}-\frac{1}{3}S_{ij}+2R_{kl}R_{ikjl}
				-\frac{2}{3}SR_{ij}-\frac{1}{6}\Delta S g_{ij}-\frac{1}{2}\pa{\abs{\ric}^2-\frac{S^2}{3}}g_{ij}\Bigg].\notag
	\end{align}
	Moreover, we point out that
	$$
	\ol{S} = S - \frac{R_{ij}f^if^j}{1+\abs{\nabla f}^2} + \nabla^i \left( \frac{\Delta f f_i-f_{ij}f^j}{1+\abs{\nabla f}^2}\right)
	$$
	and thus
	$$
	\int_M \ol{S}\,dV_g = \int_M S\,dV_g - \int_M  \frac{R_{ij}f^if^j}{1+|\nabla f|^2}\,dV_g.
	$$
	We now prove the validity of the following integral sufficient condition for the existence of a constant negative scalar-Bach curvature, in the conformal class $[g]$ of a metric $g$:
	\begin{lemma}\label{l-a1} Let $M$ be a $4$-dimensional closed manifold. If there exists a positive smooth function $u\in C^{\infty}(M)$ such that for a Riemannian metric $g$ on $M$, satisfying $\abs{B_g}_g>0$, it holds
		$$
		\int_M F^B_g\,u^2\,dV_g + 6\int_M |\nabla u|^2\,dV_g <0,
		$$
		then there exists a (unique) $C^{\infty}$ metric $\gt\in[g]$ such that $F^B_{\gt}\equiv -1$.
	\end{lemma}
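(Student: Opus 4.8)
The plan is to reduce this integral criterion to the hypothesis of Lemma \ref{l-g2} by passing to a suitable conformal representative. First I would set $g' := u^2 g$. Since $u \in C^\infty(M)$ is positive, $g'$ is a smooth Riemannian metric in $[g]$, and by the conformal covariance of the Bach tensor recalled in Section \ref{sec-3} (the identities $u^2(\bach_{\gt})_{ij}=(\bach_g)_{ij}$ and $u^8\abs{\bach_{\gt}}_{\gt}^2=\abs{\bach_g}_g^2$ used to derive \eqref{e-confF}) we get $\abs{B_{g'}}_{g'} = u^{-4}\abs{B_g}_g > 0$; in particular $F^B_{g'}$ is again smooth, so $g'$ still satisfies all the hypotheses appearing in Lemma \ref{l-g2}.

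Next I would compute the total scalar-Bach curvature of $g'$. In dimension four $dV_{g'} = u^4\, dV_g$, while \eqref{e-confF} gives $F^B_{g'} = u^{-3}\mathscr{L}^t_g u$. Hence, using that $\abs{B_g}_g>0$ makes $F^B_g$ smooth and integrating by parts,
$$
\int_M F^B_{g'}\,dV_{g'} = \int_M u^{-3}\bigl(\mathscr{L}^t_g u\bigr)\,u^4\,dV_g = \int_M u\,\mathscr{L}^t_g u\,dV_g = \int_M u\bigl(-6\Delta_g u + F^B_g u\bigr)\,dV_g = 6\int_M \abs{\nabla u}^2\,dV_g + \int_M F^B_g\,u^2\,dV_g,
$$
which is strictly negative by the assumption of the lemma. (This is exactly the equivalence, noted right after the proof of Lemma \ref{l-g2}, between $\widehat{Y}^B$ and the modified Einstein--Hilbert functional $\gt=u^2g\mapsto \mathrm{Vol}_{\gt}(M)^{-1}\int_M F^B_{\gt}\,dV_{\gt}$.)

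Finally I would apply Lemma \ref{l-g2}: since $\abs{B_g}_g>0$ and $g'\in[g]$ satisfies $\int_M F^B_{g'}\,dV_{g'}<0$, there exists a (unique) $C^\infty$ metric $\gt\in[g]=[g']$ with $F^B_{\gt}\equiv -1$, and uniqueness is inherited directly from Lemma \ref{l-g2}. I do not expect a genuine obstacle here: the proof is essentially the bookkeeping of conformal weights, the only points deserving care being that the positivity of $\abs{B_g}_g$ (not just $\abs{B_{g'}}_{g'}$) guarantees the smoothness of $F^B_g$ needed for the integration by parts and for the elliptic regularity invoked inside Lemma \ref{l-g2}, and that the weight $u^4$ in $dV_{g'}$ exactly compensates the weight $u^{-3}$ in \eqref{e-confF}, producing the quadratic form $\int_M u\,\mathscr{L}^t_g u\,dV_g$ above.
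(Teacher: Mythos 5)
Your argument is correct and coincides with the paper's own proof: both set $g'=u^2g$, use the conformal covariance \eqref{e-confF} together with $dV_{g'}=u^4\,dV_g$ and an integration by parts to identify $\int_M F^B_{g'}\,dV_{g'}$ with the assumed negative quantity, and then invoke Lemma \ref{l-g2}. Your additional remark that $\abs{B_g}_g>0$ is what guarantees the smoothness of $F^B_g$ is consistent with how the paper uses this hypothesis inside Lemma \ref{l-g2}.
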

	\begin{proof} Arguing as in \cite[Lemma 3.2]{Catino}, we consider the conformal metric $g'_{ij}=u^2 g$. By \eqref{e-confF} we have
		$$
		F^B_{g'}=S_{g'}+t|\bach_{g'}|^{\frac{1}{2}}_{g'}= u^{-2}\left(S_g+t|\bach_g|^{\frac{1}{2}}_g-6\frac{\Delta u}{u}\right).
		$$
		Therefore, since $dV_{g'}=u^{4}dV_g$, using the assumption we obtain
		$$
		\int_M F^B_{g'}\,dV_{g'} = \int_M F^B_g\,u^2\,dV_g + 6\int_M |\nabla u|^2\,dV_g <0.
		$$
		The conclusion follows now by Lemma \ref{l-g2}.
	\end{proof}

\noindent
Adapting the method described in \cite{Catino} and
using Lemma \ref{l-a1}, we are able
to find a sufficient condition for the existence
of metrics with constant negative scalar-Bach curvature:
\begin{lemma}\label{l-a2} Let $(M,g)$ be a $4$-dimensional closed manifold. Suppose that there exists a smooth function $f\in C^{\infty}(M)$ such that, for some $t>0$, it holds
	\begin{equation*}
		\int_M \left(S_g+t\abs{\bach_g+E_g(f)}_f^{\frac{1}{2}}\right)\,dV_g -\int_M \frac{R_{ij}f^if^j}{1+\abs{\nabla f}^2}\,dV_g +\frac{3}{2}\int_M \left[\frac{f_{ip}f^pf_{iq}f^q}{(1+\abs{\nabla f}^2)^2} -\frac{\abs{f_{ij}f^if^j}^2}{(1+\abs{\nabla f}^2)^3} \right]\,dV_g<0,
	\end{equation*}
	where $\abs{\cdot}_f$ denotes the norm with respect of $g+df\otimes df$, $E_g(f)$ is defined as in 
	\eqref{resto bach} and 
	\[\abs{\bach_{g+df\otimes df}}_f=\abs{\bach_g+E_g(f)}_f>0, \quad \text{on } M.\] 
	Then, there exists a (unique) $\cinf$ metric $\gt\in[g+df\otimes df]$ such that $F_{\gt}\equiv -1$.
\end{lemma}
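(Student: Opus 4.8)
The plan is to apply Lemma \ref{l-a1} to the Aubin-deformed metric $\ol{g}=g+df\otimes df$, with the explicit test function $u=\pa{1+\abs{\nabla f}^2}^{-1/4}$. Since $df\otimes df$ is positive semidefinite, $\ol{g}$ is a genuine Riemannian metric on the closed manifold $M$, and since $f\in\cinf$ the function $u$ is smooth and strictly positive (indeed $1+\abs{\nabla f}^2\geq1$). Moreover, by \eqref{bach aubin} and the standing hypothesis, $\abs{\bach_{\ol g}}_{\ol g}=\abs{\bach_g+E_g(f)}_f>0$ on $M$, so $\ol g$ meets the regularity assumption of Lemma \ref{l-a1}. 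It therefore remains to verify that, for this particular $u$, the quantity
\[
\int_M F^B_{\ol g}\,u^2\,dV_{\ol g}+6\int_M\abs{\nabla u}^2_{\ol g}\,dV_{\ol g}
\]
is exactly the expression assumed negative in the statement; the conclusion then follows directly from Lemma \ref{l-a1}, with $\gt\in[\ol g]=[g+df\otimes df]$.

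The first integral is immediate from the choice of $u$: by the volume identity in \eqref{volume form and inverse aub def} one has $u^2\,dV_{\ol g}=\pa{1+\abs{\nabla f}^2}^{-1/2}\pa{1+\abs{\nabla f}^2}^{1/2}\,dV_g=dV_g$, so
\[
\int_M F^B_{\ol g}\,u^2\,dV_{\ol g}=\int_M\pa{S_{\ol g}+t\abs{\bach_{\ol g}}_{\ol g}^{1/2}}\,dV_g.
\]
Now $\bach_{\ol g}=\bach_g+E_g(f)$ by \eqref{bach aubin} and $\abs{\,\cdot\,}_{\ol g}=\abs{\,\cdot\,}_f$ by definition, while integrating the divergence-form identity for $\ol S$ recalled just above (the $\nabla^i(\cdots)$ term integrates to zero since $M$ is closed) gives $\int_M S_{\ol g}\,dV_g=\int_M S_g\,dV_g-\int_M\frac{R_{ij}f^if^j}{1+\abs{\nabla f}^2}\,dV_g$. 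Hence the first integral equals the first two integrals in the hypothesis.

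For the gradient term, set $\phi:=1+\abs{\nabla f}^2$, so $u=\phi^{-1/4}$ and $\partial_iu=-\tfrac14\phi^{-5/4}\partial_i\phi$. Since $\abs{\nabla f}^2$ is a scalar, $\partial_i\phi=\nabla_i\abs{\nabla f}^2=2f_{ij}f^j$; combining this with $\ol g^{ij}=g^{ij}-\phi^{-1}f^if^j$ from \eqref{volume form and inverse aub def} yields
\[
\abs{\nabla u}^2_{\ol g}=\ol g^{ij}\,\partial_iu\,\partial_ju=\frac14\phi^{-5/2}\pa{f_{ip}f^pf_{iq}f^q-\frac{\abs{f_{ij}f^if^j}^2}{\phi}}.
\]
Multiplying by $6\,dV_{\ol g}=6\phi^{1/2}\,dV_g$ and integrating produces precisely $\frac32\int_M\sq{\frac{f_{ip}f^pf_{iq}f^q}{(1+\abs{\nabla f}^2)^2}-\frac{\abs{f_{ij}f^if^j}^2}{(1+\abs{\nabla f}^2)^3}}\,dV_g$, the third integral in the hypothesis. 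Adding the two contributions shows that the quantity displayed above is exactly the one assumed negative, and Lemma \ref{l-a1} then furnishes the unique $\cinf$ metric $\gt\in[g+df\otimes df]$ with $F^B_{\gt}\equiv-1$.

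There is no substantive analytic obstacle beyond what is already contained in Lemmas \ref{l-g2} and \ref{l-a1}; the content of the lemma is entirely the gauge bookkeeping. The only creative input is the choice of conformal factor $u=(1+\abs{\nabla f}^2)^{-1/4}$, tailor-made so that $u^2\,dV_{\ol g}=dV_g$, and the one point requiring care is tracking the powers of $1+\abs{\nabla f}^2$ in the Dirichlet-energy computation together with the identity $\nabla\abs{\nabla f}^2=2\,\hess f(\nabla f,\cdot)$, which is what makes the gradient integral match the third term on the nose.
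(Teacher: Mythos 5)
Your proof is correct and follows exactly the route the paper indicates (the paper states the lemma without proof, noting only that it follows by adapting \cite{Catino} and invoking Lemma \ref{l-a1}). The choice $u=(1+\abs{\nabla f}^2)^{-1/4}$ applied to the deformed metric $g+df\otimes df$ is the intended construction, and your verification that $u^2\,dV_{\ol g}=dV_g$, that $\int_M S_{\ol g}\,dV_g=\int_M S_g\,dV_g-\int_M R_{ij}f^if^j/(1+\abs{\nabla f}^2)\,dV_g$ (the divergence term vanishing on the closed manifold), and that the Dirichlet energy $6\int\abs{\nabla u}^2_{\ol g}\,dV_{\ol g}$ reproduces the third integral precisely, correctly supplies the details the authors omitted.
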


\

	\section{Proof of Theorem \ref{t-main}} \label{proofth}
	This section is dedicated to the proof of Theorem \ref{t-main}. We point out that that the technique we use takes strong inspiration from \cite{Aubin1}, \cite{Aubin2} and \cite{Catino}. Before
	we begin the proof, we state the following useful
	\vspace{0.2cm}
	\subsection*{Step 1.} First, we focus on the case
	$$
	t>0.
	$$
	From \cite{CDM}, we know that there exist a metric $g_0$ such that $\abs{\bach_{g_0}}_{g_0}>0$ on $M$, hence we can choose a metric $g\in[g_0]$ such that
	\begin{equation} \label{fpositive}
	F_g^B\geq0\quad\text{on }M,
	\end{equation}
	otherwise, Lemma \ref{l-g1} would imply the existence of a smooth metric $g\in[g_0]$ such that $F_g^B<0$ and Theorem \ref{t-main} would follow from Lemma \ref{l-g2}.
	Consider a positive smooth function $\psi\in C^\infty (M)$, a positive constant $k>0$ and define
	$$
	g' := \psi g,\quad g'' := g' +d(k\psi)\otimes d(k\psi).
	$$
	To prove the existence of a metric $\tilde{g}$ such that $F_{\gt}^B\equiv -1$, we will proceed as follows: first we will prove that 
	\begin{equation}\label{F g'' neg}
		\int_M F_{\tilde{g}''}^BdV_g<0,
	\end{equation}
	where $\tilde{g}''=(1+k^2\abs{\nabla\psi}^2)^{-\frac{1}{2}}g''$,
	for suitable choices of $\psi$ and $k$.
	Then, up to a perturbation of $k$, we will prove that
	$\abs{\bach_{g''}}_{g''}>0$ everywhere
	and the claim will follow by Lemma \ref{l-a2}. 
	To show \eqref{F g'' neg}, observe that
	$$
	g''= g' +d(k\psi)\otimes d(k\psi) = \psi \left[ g + d(2k\sqrt{\psi})\otimes d(2k\sqrt{\psi})\right] =:\psi \gb.
	$$
	Applying the same argument in the proof of \cite[Lemma 3.3]{Catino}, we deduce that
	\begin{align}\label{phi M}
		\Phi_M:=\int_M&F_{\tilde{g}''}^BdV_{\tilde{g}''}\\
		=\int_M &\left(S_{g'}+t|\bach_{g'}+E_{g'}(k\psi)|^{\frac{1}{2}}_{k\psi}\right)\,dV_{g'} -\int_M \frac{R'_{ij}\nabla_{g'}^i\psi\nabla_{g'}^j\psi}{1/k^2+|\nabla_{g'} \psi|_{g'}^2}\,dV_{g'}\notag\\
		& +\frac{3}{2}\int_M \left[\frac{\nabla^{g'}_{ip}\psi\nabla_{g'}^p\psi\nabla^{g'}_{iq}\psi\nabla_{g'}^q\psi}{(1/k^2+|\nabla_{g'}\psi|_{g'}^2)^2} -\frac{|\nabla^{g'}_{ij}\psi\nabla_{g'}^i\psi\nabla_{g'}^j\psi|^2}{(1/k^2+|\nabla_{g'}\psi|_{g'}^2)^3} \right]\,dV_{g'},\notag
	\end{align}
	where $\abs{\cdot}_{k\psi}$ denotes the norm with respect to $g'+d(k\psi)\otimes d(k\psi)$. \\
	
	With respect to the metric $g$, by standard formulas for conformal transformations (see \cite{CMbook}), we have
	\begin{align}\label{e-confinv}\nonumber
		S_{g'}&=\frac{1}{\psi}\left(S_g-3\frac{\Delta \psi}{\psi}+\frac{3}{2}\frac{|\nabla \psi|^2}{\psi^2}\right),\\\nonumber
		R'_{ij}&=R_{ij}-\frac{\psi_{ij}}{\psi}+\frac{3}{2}\frac{\psi_i\psi_j}{\psi^2}-\frac{1}{2}\frac{\Delta \psi}{\psi}g_{ij},\\
		B'_{ij}&=\frac{1}{\psi} B_{ij},\\\nonumber
		dV_{g'}&=\psi^{2}\,dV_g,\\\nonumber
		\nabla^{g'}_{ij} \psi &= \psi_{ij} -\frac{1}{\psi}\left(\psi_i\psi_j-\frac12|\nabla\psi|^2 g_{ij}\right).  
	\end{align}
	where $R'_{ij},\,B'_{ij}$ and $R_{ij},\,B_{ij}$ are relative to the metrics $g'$ and $g$, respectively.
	On the other hand, observe that
	$$
	g''= g' +d(k\psi)\otimes d(k\psi) = \psi \left[ g + d(2k\sqrt{\psi})\otimes d(2k\sqrt{\psi})\right] =:\psi \gb;
	$$
	moreover, in dimension four the Bach tensor is conformally covariant, namely
	$$
	B'_{ij}=\frac{1}{\psi}B_{ij}.
	$$
	
	Thus,
	\begin{align*}
		B'_{ij}+E'(k\psi)_{ij}&=B_{ij}''=\frac{1}{\psi}\ol{B}_{ij}\\
		&=\frac{1}{\psi}\pa{B_{ij}+E(2k\sqrt{\psi})_{ij}}\\
		&=B_{ij}+\frac{1}{\psi}E(2k\sqrt{\psi})_{ij}
	\end{align*}
	which implies that the "error term" of the Bach tensor satisfies:
	$$
	E_{g'}(k\psi)=\frac{1}{\psi}E_g(2k\sqrt{\psi}).
	$$
	In particular, the following relation is satisfied
	\begin{align*}
		\abs{\bach_{g''}}_{k\psi}^{\frac{1}{2}}=\abs{\bach_{g'}+E_{g'}(k\psi)}^{\frac{1}{2}}_{k\psi}=\frac{1}{\psi^2}\abs{\bach_g+E_g(2k\sqrt{\psi})}^{\frac{1}{2}}_{\gb}.
	\end{align*}
	Then, following the computations in \cite{Aubin2} and \cite{Catino} we get
	\begin{align}\label{e-int}\nonumber
		\Phi_M=&\int_M \left(S_{g}+\frac{t}{\psi}|\bach_g+E_g(2k\sqrt{\psi})|^{\frac{1}{2}}_{\gb}-\frac{R_{ij}\psi_i\psi_j}{\psi/k^2+|\nabla\psi|^2}\right)\psi\,dV_{g} \\\nonumber
		&+\int_M \frac{\psi_{ij}\psi^i\psi^j}{\psi/k^2+|\nabla\psi|^2}\,dV_g\\
		&+\frac{1}{k^2}\frac{3}{2}\int_M \frac{|\nabla\psi|^2}{\psi/k^2+|\nabla\psi|^2}\,dV_g-\frac{1}{k^2}\int_M \frac{\psi\Delta \psi}{\psi/k^2+|\nabla\psi|^2}\,dV_g\\\nonumber
		&+\frac{3}{2}\int_M \left[\frac{\psi_{ip}\psi^p\psi_{iq}\psi^q}{(\psi/k^2+|\nabla\psi|^2)^2} -\frac{|\psi_{ij}\psi^i\psi^j|^2}{(\psi/k^2+|\nabla\psi|^2)^3} \right]\psi\,dV_{g}\\\nonumber
		&+\frac{1}{k^2}\frac{3}{2}\int_M \frac{\tfrac14 |\nabla\psi|^6-|\nabla\psi|^2(\psi_{ij}\psi^i\psi^j)\psi}{(\psi/k^2+|\nabla\psi|^2)^3}\psi\,dV_g.
	\end{align}
	\subsection*{Step 2.}
	Let $p\in M$ and consider a local, normal, geodesic polar coordinate
	system $\rho,\omega_1,...,\omega_{n-1}$ defined in an open neighborhood $V$ 
	of $p$, in order to have
	\[
	g_{\rho\rho}=1, \quad 
	g_{\rho i}=0, \quad g_{ij}=\delta_{ij}+\rho^2 a_{ij}, \quad
	g^{\rho\rho}=1
	\]
	at $p$, where the index $i$ corresponds to the coordinate
	$\omega_i$, for $i=1,...,n-1$ and the coefficients
	$a_{ij}$ are of order 1;
	from now on, we use the
	index convention
	\[
	\alpha,\beta,\gamma,...=1,2,3,\rho, \quad
	i,j,k,...=1,2,3.
	\] The Christoffel symbols of the 
	Levi-Civita connection are written as
	\begin{equation}\label{chr}
	\Gamma_{\rho\rho}^{\rho}=\Gamma_{\rho i}^{\rho}=0, \quad 
	\Gamma_{ij}^{\rho}=-\dfrac{\rho}{2}\pa{a_{ij}+\rho
		\partial_\rho a_{ij}}.
	\end{equation}
	Let $B_r$ be the geodesic ball centered at $p$ of radius $0<r<r_0$,
	with $r_0$ such that $B_{r_0}\subset V$ and let $y=y(x)$ is a real $C^4$ function such that
	\[
	\begin{cases}
		&y(-x)=y(x), \mbox{ } \forall x\in\RR\\
		&y(x)=1, \mbox{ } \forall\abs{x}\geq 1\\
		&y(x)\geq\delta>0, \mbox{ } \forall x\in\RR\\
		&y'(x)>0, \mbox{ } \forall 0<x<1\\
		&y'(x)\geq 1, \mbox{ } \forall (1/4)^{1/(n-1)}\leq x\leq
		(3/4)^{1/(n-1)}\\
		&\abs{y''}\leq\frac{y'}{(1-x)} \mbox{ } \text{ as }x\ra1
	\end{cases}.
	\]
	Let $B_r=B_r(p)$ be the geodesic ball centered at $p$ of radius $0<r<r_0$, with $r_0$ such that $B_r\subset M$. For $p'\in B_r$, we choose
	$$
	\psi(p'):=y\left(\frac{\rho}{r}\right),\quad \rho=\text{dist}_g(p',p).
	$$
	From now on, to simplify the expressions, we will omit arguments in the functions: it will be clear that if $\psi$, $\psi_\rho$, etc. are computed at $p'\in B_r$, then $y, y', y''$ will be computed at $\rho/r$ with $\rho=\text{dist}_g(p',p)$. Moreover, we will denote by $C=C(n,\delta, t, p)>0$ some universal positive constant independent of $r$ and $k$. 
	
	\subsection*{Step 3}
	Arguing as in \cite{Catino}, it is possible to obtain an estimate for the terms not involving the Bach tensor, when they are restricted to the ball $B_r$. In particular, applying the same argument of Step 3 of \cite{Catino} we obtain that
	\begin{align}\label{e-ie1}
		\Phi_{B_r} \leq & \,t\int_{B_r} |\bach_g+E_g(2k\sqrt{\psi})|^{\frac{1}{2}}_{\gb}\,dV_{g} +C|B_r|+\frac{1}{r^2}\int_{B_r} y''\,dV_g+\frac{1}{k^2}\Theta,
	\end{align}
	where $\Phi_{B_r}$ denotes the quantity defined in \eqref{e-int} restricted to $B_r$. Note that this intermediate estimate, when $t=0$, coincides with the one of Aubin in \cite{Aubin2}.

	\subsection*{Step 4}
	We now give an estimate of the remaining terms, in which the Bach tensor appears.
	Since
	$$
	\gb=g + d(2k\sqrt{\psi})\otimes d(2k\sqrt{\psi}),
	$$
	for the sake of simplicity, we introduce
	\begin{equation}
		\eta:=2\sqrt{\psi},
	\end{equation}
	where 
	\[
	\psi(p'):=y\pa{\dfrac{\rho}{r}}
	\]
	and we have
	\begin{equation}\label{gbar}
	\gb=g + k^2d\eta\otimes d\eta.
	\end{equation}
	From \eqref{volume form and inverse aub def}, we have
	$$
	\gb^{\rho\rho}=\frac{1}{1+k^2 \eta_{\rho}^2},\quad \gb^{\rho i} =0, \quad \gb^{ij}=g^{ij}.
	$$
	In particular, since $\gb\geq g$, for every $(0,p)$-tensor $\operatorname{T}$ we immediately get that
	\begin{equation}\label{normgbarg}
		|\operatorname{T}_g|_{\gb} \leq |\operatorname{T}_g|_g\leq C,
	\end{equation}
	where $C$ is a constant.
	Note that, by definition of $\eta$ and \eqref{chr},
	\begin{align*}
		\eta_i&=0, \quad 
		\eta_\rho=\dfrac{1}{2r}\eta^{-1}y'\pa{\dfrac{\rho}{r}};\\
		\eta_{\rho\rho}&=-
		\dfrac{1}{4}\sq{\dfrac{1}{r}y'\pa{\dfrac{\rho}{r}}}^2
		\eta^{-3}+\dfrac{1}{2r^2}\eta^{-1}y''\pa{\dfrac{\rho}{r}};\notag\\
		\eta_{\rho i}&=0, \quad 
		\eta_{ij}=\dfrac{1}{4r}\eta^{-1}\rho\pa{a_{ij}+\rho
			\partial_\rho a_{ij}}y'\pa{\dfrac{\rho}{r}},
	\end{align*}
	which imply that there exists $C\in\erre$ such that
	\begin{align}\label{stime eta pt1}
		\frac{C^{-1} y'}{r}\leq\eta_\rho	\leq\frac{C y'}{r};\qquad\abs{\eta_{ij}}\leq Cr\abs{\eta_{\rho}}\leq C
	\end{align}
	and, more in general, if $r$ is sufficiently small,
	\[
	\eta_{\alpha}\leq \dfrac{C}{r}, \quad
	\abs{\eta_{\alpha\beta}}\leq\dfrac{C}{r^2}.
	\]
	Observe that, since $y$ only depends on $\rho$, 
	differentiating $\eta$ with respect to an angular
	coordinate does not raise the exponent of 
	$r$ at the denominator, while differentiating with
	respect to $\rho$ produces an additional $1/r$ 
	in the derivative: hence, 
	one can easily note that
	\begin{equation*} 
		\abs{\partial_{\alpha_1,...,\alpha_N}\eta}\leq\dfrac{C}{r^M}, 
	\end{equation*}
	where $M=\#\{i=1,...,N: \alpha_i=\rho\}$. In particular, we have 
	\begin{align}\label{estimeta}
		\abs{\eta_{\rho\rho}}\leq\frac{C}{r^2};\quad\abs{\eta_{\rho\rho\rho}}\leq\frac{C}{r^3};\quad\abs{\eta_{\rho\rho\rho\rho}}\leq\frac{C}{r^4};\quad
		\abs{\eta_{ij}}\leq C; \quad\abs{\partial_{\rho}\eta_{ij}}\leq \frac{C}{r};\quad \abs{\partial_{t}\partial_{\rho}\eta_{ij}}\leq \frac{C}{r}; \quad \abs{\partial_{i_1...i_l}\eta_{ij}}\leq C.
	\end{align}
	Moreover, by assumption $\abs{y''(x)}\leq y'/(1-\frac{\rho}{r})$ and the definition of $y$, we exploit 
	\begin{align}\label{eta rho rho e eta rho}
		\abs{\eta_{\rho\rho}}\leq C\pa{\frac{(y')^2}{r^2}+\frac{\abs{y''}}{r^2}}\leq\frac{C\abs{y'}}{r^2(\frac{\rho}{r}-1)}\leq\frac{C\abs{\eta_{\rho}}}{r(\frac{\rho}{r}-1)}\leq\frac{C\abs{\eta_{\rho}}}{r-\rho}
	\end{align}
	and
	\begin{align}\label{eta ang ed eta rho}
		\begin{cases}
			\abs{\partial_{\rho}\eta_{ij}}=\abs{\partial_\rho\sq{\dfrac{1}{4r}\eta^{-1}\rho\pa{a_{ij}+\rho
						\partial_\rho a_{ij}}y'\pa{\dfrac{\rho}{r}}}}\leq \frac{C\abs{\eta_{\rho}}}{1-\frac{\rho}{r}}\leq\frac{C\abs{\eta_{\rho}}}{r-\rho};\\
			\abs{\partial_{t}\partial_{\rho}\eta_{ij}}=\abs{\partial_{t}\partial_\rho\sq{\dfrac{1}{4r}\eta^{-1}\rho\pa{a_{ij}+\rho
						\partial_\rho a_{ij}}y'\pa{\dfrac{\rho}{r}}}}\leq \frac{C\abs{\eta_{\rho}}}{1-\frac{\rho}{r}}\leq\frac{C\abs{\eta_{\rho}}}{r-\rho};
		\end{cases}
	\end{align}
	\noindent
	summarizing, we have
	\begin{align}\label{stime eta}
		\abs{\partial_\rho\eta_{p}}&=\abs{\eta_{\rho\rho}+\Gamma^\alpha_{\rho\rho}\eta_{\alpha}}=\abs{\eta_{\rho\rho}}\leq\frac{C\abs{\eta_{\rho}}}{r-\rho};\notag\\
		\abs{\partial_\rho\eta_{\rho\rho}}&=\abs{\eta_{\rho\rho\rho}+2\Gamma^\alpha_{\rho\rho}\eta_{\alpha\rho}}=\abs{\eta_{\rho\rho\rho}}\leq\frac{C}{r^3}\pa{\abs{(y')^3}+\abs{y'y''}+\abs{y'''}}\leq\frac{C}{r^3};\notag\\
		\abs{\partial_\rho\eta_{ij}}&\leq\frac{C\abs{\eta_{\rho}}}{1-\frac{\rho}{r}}=\frac{Cr\abs{\eta_{\rho}}}{r-\rho};\notag\\
		\abs{\partial_{\rho}\partial_{\rho}\eta_{ij}}&=\abs{\frac{\eta_{\rho\rho}\rho y'L}{r}+\frac{\eta_\rho^2\rho y'L}{r}+\frac{\eta_{\rho}y'L}{r}+\frac{\eta_\rho\rho y''L}{r^2}+\frac{\eta_{\rho}y'L}{r}+\frac{Ly''}{r^2}+\frac{\eta_{\rho}\rho y''L}{r}+\frac{\rho y'''L}{r^3}}\\&\leq\frac{C\abs{y'''}}{r^2}\leq\frac{C}{r^2};\notag\\
	\abs{\partial_l\eta_{ij}}&=\abs{\dfrac{1}{4r}\eta^{-1}\rho\pa{\partial_l a_{ij}+\rho
			\partial_l\partial_\rho a_{ij}}y'\pa{\dfrac{\rho}{r}}}\leq Cr\abs{\eta_{\rho}}\leq C;\notag\\
		\abs{\partial_t\partial_l\eta_{ij}}&=\abs{\dfrac{1}{4r}\eta^{-1}\rho\pa{\partial_t\partial_l a_{ij}+\rho
			\partial_t\partial_l\partial_\rho a_{ij}}y'\pa{\dfrac{\rho}{r}}}\leq Cr\eta_{\rho}\leq C;\notag\\
		\abs{\partial_t\partial_\rho\eta_{ij}}&\leq C\abs{\partial_\rho\eta_{ij}}\leq\frac{rC\abs{\eta_{\rho}}}{r-\rho},\notag
	\end{align}
	where $L$ denotes a bounded quantity and $C$ is a constant. 	Depending on the term we need to estimate, we are going to use \eqref{estimeta} or \eqref{eta rho rho e eta rho} and \eqref{eta ang ed eta rho}.\\
	To give an estimate of the remaining integral depending on the Bach tensor we exploit the following
	\begin{lemma}\label{l-resto B}
		We have
	\begin{align}\label{estB}
		\int_{B_r}\abs{\bach_g+E_g(2k\sqrt{\psi})}_{\gb}^{\frac{1}{2}}dV_g=\int_{B_r}\abs{\bach_{\gb}}^{\frac{1}{2}}_{\gb}dV_g\leq C\abs{B_r}+\frac{C}{k}\Theta,
	\end{align}
		for some constant $C=C(\delta,t,p)$ and a continuous function $\Theta=\Theta(p,\frac{1}{k},r)>0$; here $\abs{B_r}$ denotes the volume of $B_r$.
	\end{lemma}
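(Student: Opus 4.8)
The plan is to bound the integral in \eqref{estB} by splitting the Bach tensor of $\gb$ as in \eqref{bach aubin}, $\bach_{\gb}=\bach_g+E_g(2k\sqrt\psi)$, and controlling the two pieces. Since $s\mapsto\sqrt{s}$ is subadditive and $\gb\geq g$, so that $\abs{\operatorname{T}_g}_{\gb}\leq\abs{\operatorname{T}_g}_g$ for every covariant tensor $\operatorname{T}$ (cf. \eqref{normgbarg}), the triangle inequality gives
\[
\abs{\bach_{\gb}}_{\gb}^{\frac12}\ \leq\ \abs{\bach_g}_{\gb}^{\frac12}+\abs{E_g(2k\sqrt\psi)}_{\gb}^{\frac12}\ \leq\ \abs{\bach_g}_g^{\frac12}+\abs{E_g(2k\sqrt\psi)}_{\gb}^{\frac12}.
\]
The first summand is bounded on the compact manifold $M$, whence $\int_{B_r}\abs{\bach_g}_g^{\frac12}\,dV_g\leq C\abs{B_r}$, and the task reduces to estimating $\int_{B_r}\abs{E_g(2k\sqrt\psi)}_{\gb}^{\frac12}\,dV_g$.

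For this I would feed in the full variation formula of the Bach tensor under Aubin's deformation computed in Section \ref{app}. Writing $f:=k\eta$ with $\eta=2\sqrt\psi$, so that $\gb=g+k^2\,d\eta\otimes d\eta$ by \eqref{gbar}, the expression \eqref{resto bach} unwinds $E_g(2k\sqrt\psi)$ into a finite sum of monomials
\[
P\cdot\frac{\nabla^{(\ell_1)}\!f\,\cdots\,\nabla^{(\ell_N)}\!f}{\pa{1+k^2\eta_\rho^2}^{m}},\qquad 1\leq\ell_s\leq 4,\qquad m\geq 1,
\]
where $P$ is a tensor built from $\riem_g$ and finitely many of its covariant derivatives (hence uniformly bounded on $B_r$) and the number $N$ of $f$-factors satisfies $N\leq 2m$: this ``ratio'' is already present in the Riemann error \eqref{riemaubin} and is not increased by the curvature contractions and the iterated covariant Laplacians in \eqref{resto bach}, since each further covariant derivative either raises the order $\ell_s$ of an existing factor, or adds a factor $f^af_{ab}/\pa{1+k^2\eta_\rho^2}$ (which keeps the ratio), or differentiates a denominator (which decreases it). Since $\nabla^{(\ell)}f=k\,\nabla^{(\ell)}\eta$, the bounds \eqref{estimeta}, \eqref{stime eta}, \eqref{eta rho rho e eta rho}, \eqref{eta ang ed eta rho} on the covariant derivatives of $\eta$ --- together with $\eta_\rho\geq C^{-1}y'/r$ from \eqref{stime eta pt1} and with $\abs{\eta_{ij}}\leq Cr\abs{\eta_\rho}$ for an angular Hessian (so that it carries a compensating factor $r$, not an extra $1/r$) --- then control each monomial on $B_r$.

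The integration I would perform after decomposing $B_r$ according to the size of $k^2\eta_\rho^2$ and to the distance from the two spheres $\{\rho=0\}$ and $\{\rho=r\}$, where $y'$ --- hence $\abs{\nabla\eta}$ --- degenerates. On the region $\{k^2\eta_\rho^2\geq 1\}$ the denominator supplies the factor $\pa{1+k^2\eta_\rho^2}^{-m}\leq k^{-2m}\eta_\rho^{-2m}$, so that, using $N\leq 2m$ and $k\eta_\rho\geq 1$, each monomial is bounded by a product of ratios $\nabla^{(\ell_s)}\eta/\eta_\rho$, i.e. by a bounded quantity (the genuinely nonsingular monomials) plus singularities in $\rho$ and $r-\rho$; after taking the square root --- here is where the conformally forced exponent $\tfrac12$ on $\bach$ is essential, turning a bound that would grow in $k$ into one that does not --- and using that on this region $r-\rho\gtrsim r^2/k$, these integrate against $dV_g\sim\rho^{n-1}\,d\rho\,d\omega$ (in dimension four) to $\leq C\abs{B_r}+\frac{C}{k}\Theta_1$. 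On the complement $\{k^2\eta_\rho^2< 1\}$, which by $\eta_\rho\geq C^{-1}y'/r$ is contained in shrinking neighbourhoods of $\{\rho=0\}$ and $\{\rho=r\}$, of $g$-volume $O(r^{n+1}/k)$ for fixed $r$, the denominators are only of order one but the domain is small, and a direct computation with the same bounds gives $\int_{\{k^2\eta_\rho^2<1\}}\abs{E_g(2k\sqrt\psi)}_{\gb}^{\frac12}\,dV_g\leq\frac{C}{k}\Theta_2$. Adding these two estimates to the $\abs{\bach_g}$-contribution and using $\abs{\bach_g+E_g(2k\sqrt\psi)}_{\gb}=\abs{\bach_{\gb}}_{\gb}$ yields \eqref{estB} with $\Theta:=\Theta_1+\Theta_2$.

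The core difficulty --- and the reason the complete variation formula of Section \ref{app} is needed, rather than only \eqref{riemaubin}--\eqref{scalaubin} --- is this last step, carried out term by term over all the monomials of \eqref{resto bach}: being fourth order, the Bach tensor makes $E_g(2k\sqrt\psi)$ contain covariant derivatives of $\eta$ up to order four, controllable only by large negative powers of $r$ and of $r-\rho$, and every such factor has to be paired against the available powers of $1+k^2\eta_\rho^2$. This bookkeeping rests squarely on the inequality $N\leq 2m$, on the extra factor $r$ carried by angular derivatives of $\eta$, on the refined estimates \eqref{eta rho rho e eta rho}--\eqref{eta ang ed eta rho} near the outer sphere, and on the square-root weight; by contrast the subadditivity splitting and the volume counts on the degeneracy shells are routine.
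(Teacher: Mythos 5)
Your proposal takes a genuinely different route from the paper's, and there is a concrete gap in the key step.

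The paper does not split $\bach_{\gb}$ as $\bach_g+E_g(2k\sqrt\psi)$; instead it works directly with the expression \eqref{normbarbach} of $\bach_{\gb}$ in terms of $\bar\Delta\ol{\ric}$, $\ol{\hess}(\ol S)$, $\ol{\ric}*\ol{\riem}$, $\ol S\,\ol{\ric}$, $\bar\Delta\ol S\,\ol g$ and $\abs{\ol{\ric}}^2\ol g$, and proves the single pointwise bound
$\abs{\bach_{\gb}}_{\gb}\leq C\pa{1+\frac{1}{r-\rho}+\frac{1}{k^2}\Theta}$
valid on all of $B_r$, from which \eqref{estbach} follows by taking the square root and observing that $(r-\rho)^{-1/2}$ is integrable on $B_r$. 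No decomposition of $B_r$ is involved. Your organization via $\bach_g+E_g$, subadditivity of $\sqrt{\cdot}$, and a bookkeeping principle $N\leq 2m$ on the monomials is a legitimate reorganization of the same computation and does isolate the genuinely problematic object $E_g$; that part is fine.

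The genuine gap is in the claim that on $\{k^2\eta_\rho^2\geq1\}$ each monomial reduces to a product of ratios $\nabla^{(\ell_s)}\eta/\eta_\rho$ that carry at worst singularities in $\rho$ and $r-\rho$. This is only true for $\ell_s\leq2$: the paper's hypothesis on $y$ gives $\abs{y''}\leq y'/(1-x)$ near $x=1$, hence $\abs{\eta_{\rho\rho}}\leq C\abs{\eta_\rho}/(r-\rho)$ as in \eqref{eta rho rho e eta rho}, and the angular Hessian carries the compensating factor $r$ from \eqref{stime eta pt1}. But there is no assumption of the form $\abs{y'''}\leq Cy'/(1-x)^2$ or $\abs{y^{(4)}}\leq Cy'/(1-x)^3$; the paper only bounds $\abs{\eta_{\rho\rho\rho}}\leq C/r^3$ and $\abs{\eta_{\rho\rho\rho\rho}}\leq C/r^4$ crudely in \eqref{estimeta}. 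Consequently the ratios $\eta_{\rho\rho\rho}/\eta_\rho$ and $\eta_{\rho\rho\rho\rho}/\eta_\rho$ behave like $1/(r^2y')$ and $1/(r^3y')$, where $y'\to0$ at both $\rho\to0$ and $\rho\to r$; since $y\in C^4$ with $y\equiv1$ for $\abs{x}\geq1$ forces $y'(x)=O((1-x)^3)$ near $x=1$, these blow up far faster than any power of $1/(r-\rho)$ that the square root could tame. Because the Bach tensor is fourth order, such factors do appear in $E_g$ (e.g.\ in $\partial_\rho\partial_\rho F_{\rho\rho}$ and $\partial_\rho\partial_\rho F_{ij}$ in \eqref{derivata seconda}), so the ratio argument cannot close. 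The paper sidesteps this precisely by never forming these ratios: the third- and fourth-order terms are bounded against powers of $1+k^2\eta_\rho^2$ and dumped wholesale into the $\frac{1}{k^2}\Theta$ bucket, while the only term left carrying a $\rho$-dependent singularity is the $\eta_{\rho\rho}$-type contribution, controlled by the refined estimate and producing only the integrable $1/(r-\rho)$ factor. A related, smaller issue: your volume count $O(r^{n+1}/k)$ for $\{k^2\eta_\rho^2<1\}$ tacitly assumes $y'\sim 1-x$ near $x=1$, which is incompatible with $y\in C^4$ vanishing to fourth order at $x=1$; the thickness of the outer shell actually depends on the unspecified vanishing rate of $y'$.

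In short: the reorganization is sound, the monomial bookkeeping $N\leq2m$ is a correct observation, but the reduction to ratios $\nabla^{(\ell)}\eta/\eta_\rho$ for $\ell\geq3$ is not available from the hypotheses on $y$, and this is exactly where the paper's more direct pointwise estimate, with all higher-order $\eta$-derivatives absorbed into $\frac{1}{k^2}\Theta$ and only the $\eta_{\rho\rho}$-term contributing the integrable $(r-\rho)^{-1}$, is essential.
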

	\begin{proof}
		Given a tensor $\operatorname{T}$ in the metric $g$, we will denote as $\ol{\operatorname{T}}$ the same tensor with respect to the metric $\gb$.\\
		We recall that 
		by \eqref{resto bach}, we have
		\begin{align}\label{barbach}
			\abs{\ol{\bach}}_{\gb}&=\abs{\frac{1}{2}\sq{\bar{\Delta} \ol{\ric}-\frac{1}{3}\ol{\hess}(\ol{S})+2\ol{\ric}*\ol{\riem}
					-\frac{2}{3}\ol{S}\,\ol{\ric}-\frac{1}{6}\bar{\Delta} \ol{S} \ol{g}-\frac{1}{2}\pa{\abs{\ol{\ric}}_{\ol{g}}^2-\frac{\ol{S}^2}{3}}\ol{g}}}_{\gb}\\
				&\leq\frac{1}{2}\sq{\abs{\bar{\Delta} \ol{\ric}}_{\gb}+\frac{1}{3}\abs{\ol{\hess}(\ol{S})}_{\gb}+2\abs{\ol{\ric}*\ol{\riem}}_{\gb}+\frac{2}{3}\abs{\ol{S}\,\ol{\ric}}_{\gb}+\frac{1}{6}\abs{\bar{\Delta} \ol{S} \ol{g}}_{\gb}+\frac{1}{2}\abs{\pa{\abs{\ol{\ric}}_{\ol{g}}^2-\frac{\ol{S}^2}{3}}\ol{g}}_{\gb}},  \notag
		\end{align}
		where $*$ denotes the contraction $R_{\gamma\delta}R_{\alpha\gamma\beta\delta}$.
		The computations in Section \ref{app} show that
		\begin{equation*}
			\begin{cases}
				\abs{\bar{\Delta}\ol{\ric}}_{\gb}\leq C\pa{1+\frac{1}{r-\rho}+\frac{1}{k^2}\Theta};\\
				\abs{\ol{\hess}(\ol{S})}_{\gb}\leq C\pa{1+\frac{1}{r-\rho}+\frac{1}{k^2}\Theta};\\
				\abs{\ol{\ric}*\ol{\riem}}_{\gb}\leq C\pa{1+\frac{1}{k^2}\Theta};\\
				\abs{\ol{S}\,\ol{\ric}}_{\gb}\leq 	C\pa{1+\frac{1}{k^2}\Theta};\\
				\abs{\bar{\Delta} \ol{S} \ol{g}}_{\gb}\leq C\pa{1+\frac{1}{r-\rho}+\frac{1}{k^2}\Theta};\\
				\abs{\pa{\abs{\ol{\ric}}_{\gb}-\frac{\ol{S}^2}{3}}\gb}_{\gb}\leq 	C\pa{1+\frac{1}{k^2}\Theta},
			\end{cases}
		\end{equation*}
		for some $C=C(n,\delta,t,p)>0$ and $\Theta=\Theta(p,1/k,r)>0$; hence we have
		$$\abs{\ol{\bach}}_{\gb}\leq C\pa{1+\frac{1}{r-\rho}+\frac{1}{k^2}\Theta}.$$
		As a consequence, we get
		\begin{equation}\label{estbach}
		\abs{\ol{\bach}}_{\gb}^{\frac{1}{2}}\leq C+\frac{C}{(r-\rho)^{\frac{1}{2}}}+\frac{C}{k}\Theta.
		\end{equation}
		Note that, when $\rho\ra r$, we have that $1/(r-\rho)^{\frac{1}{2}}$ is integrable and thus \eqref{estbach} implies 
		\[t\int_{B_r}\abs{\ol{\bach}}_{\gb}^{\frac{1}{2}}\leq C\abs{B_{r}}+\frac{C}{k}\Theta.\]	
	\end{proof}
	\begin{rem}
	We point out that, in \cite[Lemma 4.1]{Catino}, there is a misprint: the power of $k$ should be
	$-1$ instead of $-2$, which, however, does not affect the validity of the arguments. 
	\end{rem}
	\subsection*{Step 5} 
	Using Lemma \ref{l-resto B} in \eqref{e-ie1}, we obtain
	\begin{equation}\label{e-ie3}
		\Phi_{B_r} \leq  \,C|B_r|+\frac{1}{r^2}\int_{B_r} y''\,dV_g+\frac{1}{k}\Theta.
	\end{equation}
	Now, we have to make sure that $\abs{\bach_{\gb}}_{\gb}>0$
	on $M$: in order to do so, assume that 
	there exists a point $p'\in B_r$ such that 
	$\abs{\bach_{\gb}}_{\gb}=0$ vanishes at $p'$.
	If we evaluate the components of 
	$\bach_{\gb}$ at $p'$, we obtain that the equation
	$\ol{B}_{\alpha\beta}(p')=0$ is a polynomial equation of
	finite degree $N_{\alpha,\beta}$
	in the variable $k$, for $\alpha,\beta=1,2,3,\rho$,
	up to
	multiplying the equation for $(1+k^2\eta_\rho^2)^{\lambda}$, where
	$\lambda=\lambda(\alpha,\beta)$ is the highest
	power of $(1+k^2\eta_\rho^2)$ appearing in the
	denominators of the expression of $\ol{B}_{\alpha\beta}$
	(this can be done since all the denominators in these
	expressions are of the
	form $(1+k^2\eta_\rho)^{\gamma}$, as can be seen 
	in the previous computations and in Section \ref{app}:
	this means that $\abs{\bach_{\gb}}_{\gb}=0$ at $p'$ if and only
	if $k$ is a root of all the polynomials, which
	implies that
	\[
	k\in A:=\set{k_1,...,k_L}, \quad L\leq 
	N:=\min_{\alpha,\beta=1,2,3,\rho}N_{\alpha,\beta}
	\]
	where $k_1,...,k_L$ are the common roots of the
	polynomials. 
	We observe that, since $k$ is a fixed constant in
	\eqref{gbar}, the roots of the polynomials 
	$\ol{B}_{\alpha\beta}(q)$ have to be
	contained in $A$, for every $q\in B_r$
	such that $\abs{\bach_{\gb}}_{\gb}=0$ at $q$.
	Therefore, in order to have that $\bach_{\gb}$ does not
	vanish on $B_r$, it is sufficient to choose 
	$k$ outside of $A$ in \eqref{gbar}: hence,
	we can conclude that $\abs{\bach_{\gb}}_{\gb}\neq 0$
	on $B_r$.
	
	In order to conclude the proof, we apply the same argument as the one in Step 5 of \cite{Catino}:
	for the sake of completeness, we include it here. 
	Using \eqref{e-ie3} and that, by assumption, $y'(x)\geq 1$ for all $(1/4)^{1/(n-1)}\leq x\leq (3/4)^{1/(n-1)}$, we obtain
	\begin{align*}
		\Phi_{B_r}  &\leq  \,C\left(1+\frac{1}{r}\right)|B_r|-\frac{n-1}{r}|\SS^{n-1}|\inf_M \sqrt{\text{det}g_{ij}} \int_{r(\tfrac{1}{4})^{1/(n-1)}}^{r(\tfrac{3}{4})^{1/(n-1)}}\rho^{n-2}\,d\rho+\frac{1}{k}\Theta\\
		&\leq C\left(1+\frac{1}{r}\right)|B_r|-\frac{C_2}{r^2}|B_r|+\frac{1}{k}\Theta,
	\end{align*}
	where we used the fact that $|B_r|\sim c r^n$ as $r\to 0$. In particular, there exist a continuous function $\lambda(p)> 0$ and, for $p\in M$ fixed, a continuous function $\Theta_p(r)>0$ in $r$, for $0<r<r_0$, such that
	$$
	\Theta(p, 1/k, t)\leq \Theta_p(r),
	$$
	and
	\begin{equation}\label{e-ie4}
		\Phi_{B_r}  \leq \left[C\left(1+\frac{1}{r}\right)-\frac{\lambda}{r^2}\right]|B_r| +\frac{1}{k}\Theta_p(r).
	\end{equation}
	Since, by \eqref{fpositive}, $F_g^B=S_g+t|\bach_g|_g^{\frac{1}{2}}\geq 0$, given $\nu>0$, there exists a positive radius $0<r_1<r_0$ such that
	\begin{equation}\label{e-r1}
		\frac{\lambda}{r_1^2}-C\left(1+\frac{1}{r_1}\right)-1\geq\nu \widehat{F}_g^B,
	\end{equation}
	where $\widehat{F}_g^B:=\left(\int_M F_g^B\,dV_g\right)/\text{Vol}_g(M)$.
	
	We consider $h$ disjoint geodesic balls 
	$B^j_{r_1}(p_j)$ of radius $r=r_1$, $j=1,...,h$, together
	with the corresponding function $\psi^{[j]}$, as
	constructed above, such that, for 
	$\nu$ sufficiently large,
	\[
	\sum_{j=1}^h\abs{B^j_{r_1}(p_j)}>\dfrac{1}{\nu}\mathrm{Vol}_g(M);
	\] 
	we define $A_j$ as before and, on $B^j$, we choose
	$k_j$ such that the Bach tensor of the deformed metric
	does not vanish on $B^j$ and $k_j\not\in A_i$ for every 
	$i=1,...,h$. 
	
	On every $B^j$, we set
	\[
	k\geq\max\left\{1,\sup_{j=1,\ldots,h} 
	\frac{\Theta_{p_j}(r_1)}{|B^j_{r_1}(p_j)|}\right\}, \quad
	k\not\in\bigcup_{j=1}^hA_j,
	\]
	which is possible since $\bigcup_{j=1}^hA_j$ is a finite set.
	From \eqref{e-ie4} and \eqref{e-r1}, for all $j=1,\ldots,h$, we get
	$$
	\Phi_{B^j_{r_1}}\leq -\nu \widehat{F}_g^B |B^j_{r_1}(p_j)| -|B^j_{r_1}(p_j)| +
	\frac{1}{k}\Theta_{p_j}(r_1) \leq -\nu \widehat{F}_g^B |B^j_{r_1}(p_j)|.
	$$
	If we define
	\[
	\psi:=\begin{cases}
		1, \quad &\text{on } M\setminus\bigcup_{j=1}^h B^j\\
		\psi^{[j]}, \quad &\text{on } B^j,
	\end{cases}
	\]
	we obtain 
	$$
	\Phi_M\leq \int_M F_g^B\,dV_g -\nu \widehat{F}_g^B \sum_{j=1}^{h}|B^j_{r_1}(p_j)| =
	\widehat{F}_g^B\left(\text{Vol}_g(M)-\nu\sum_{j=1}^{h}|B^j_{r_1}(p_j)|\right)< 0;
	$$
	furthermore, by our choice of $k$ and the fact that $\abs{\bach_g}_g>0$ on $M$, we
	obtain that $\abs{\bach_{\gb}}_{\gb}>0$ on $M$.
	By Lemma \ref{l-a2}, there exists a metric $\gt\in[\gb]$ such that $F^B_{\gt}\equiv -1$.
	
\
	
	Finally, we consider the case $t\leq0$; by \cite{Aubin1, Aubin2} we know that, on a closed $4$-dimensional manifold, there exists a Riemannian metric $g'$ with constant scalar curvature $-1$, which is constructed \emph{via} the same
	deformation we exploited in the previous case, starting
	from a reference metric. Hence,
	let $g$ be a Riemannian metric on $M$ 
	such that $\abs{\bach_g}_g>0$ at every point of $M$: 
	exploiting Aubin's proof and the previous argument on the choice of $k$, we 
	can produce a metric $\gb$ such that $\int_M S_{\gb}dV_{\gb}<0$ and $\abs{\bach_{\gb}}_{\gb}>0$ on
	$M$. Therefore, since  $t\leq 0$, obviously $\int_M F^B_{\gb}dV_{\gb}<0$ and, by Lemma \ref{l-g2}, there exists a metric $\gt\in[\gb]$ such that $F^B_{\gt}\equiv -1$.
	
\

This concludes the proof of Theorem \ref{t-main}.\begin{flushright}$\square$\end{flushright}
	
\

	\section{Estimates on the deformed Bach tensor}\label{app}
	In this section we prove the estimate \eqref{estbach} that we used in the fourth step of the proof of Theorem \ref{t-main}:
	for the sake of simplicity, we will write $\operatorname{T}=\operatorname{T}_g$ and 
	$\ol{\operatorname{T}}=\operatorname{T}_{\gb}$ for every considered tensor $\operatorname{T}$. 
	We recall that, for the metric $\gb$ defined in \eqref{gbar}, we have
		\begin{equation}\label{normbarbach}
			\abs{\ol{\bach}}_{\gb}=\abs{\frac{1}{2}\sq{\bar{\Delta} \ol{\ric}-\frac{1}{3}\ol{\hess}(\ol{S})+2\ol{\ric}*\ol{\riem}
					-\frac{2}{3}\ol{S}\,\ol{\ric}-\frac{1}{6}\bar{\Delta} \ol{S} \ol{g}-\frac{1}{2}\pa{\abs{\ol{\ric}}_{\ol{g}}^2-\frac{\ol{S}^2}{3}}\ol{g}}}_{\gb}.
		\end{equation}
To prove the validity of \eqref{estbach}, we analyze the components of $\ol{B}$ separately, proving that each term in \eqref{normbarbach} satisfies
		\begin{align}\label{ineq to prove}
			\abs{\,\cdot\,}_{\gb}\leq C\pa{1+\frac{1}{r-\rho}+\frac{1}{k}\Theta}\quad\text{in }B_r,
		\end{align} 
		where $C$ is a positive constant and $\Theta=\Theta(p,\frac{1}{k},r)$ is a positive continuous function.
		For the sake of simplicity, given a $(0,q)$-tensor $\operatorname{T}$, we will denote
		\begin{align*}
			\abs{T_{\alpha_1...\alpha_q}}_{\ol{g}}^2=T_{\alpha_1...\alpha_q}T_{\beta_1...\beta_q}\ol{g}^{\alpha_1\beta_1}...\,\ol{g}^{\alpha_q\beta_q};
		\end{align*}
		for instance, on a $(0,2)$-tensor we have
		\begin{align*}
			\abs{T_{ij}}_{\gb}^2&= T_{ij}T_{lt}\gb^{il}\gb^{jt}=\abs{T_{ij}}_g^2\leq C\abs{T_{ij}}^2;\\
			\abs{T_{i\rho}}_{\gb}^2&= T_{i\rho}T_{j\rho}\gb^{ij}\gb^{\rho\rho}=\frac{1}{1+k^2\eta_{\rho}^2}\abs{T_{i\rho}}^2_g\leq \frac{C}{1+k^2\eta_{\rho}^2}\abs{T_{i\rho}}^2;\\
			\abs{T_{\rho\rho}}_{\gb}^2&= T_{\rho\rho}T_{\rho\rho}\gb^{\rho\rho}\gb^{\rho\rho}=\frac{1}{(1+k^2\eta_{\rho}^2)^2}\abs{T_{\rho\rho}}^2_g=\frac{1}{(1+k^2\eta_{\rho}^2)^2}\abs{T_{\rho\rho}}^2.
		\end{align*}
		Note that when we consider Aubin's deformation, we have (see Section \ref{sec-3})
		\begin{align*}
			\ol{R}_{\alpha\beta\gamma\delta}&=R_{\alpha\beta\gamma\delta}+E^R_{\alpha\beta\gamma\delta};\\
			\ol{R}_{\alpha\beta}&=R_{\alpha\beta}+F_{\alpha\beta};\\
			\ol{S}&=S+H;
		\end{align*}
		where 
		\begin{align*}
			E^R_{ijlt}=&\frac{k^2}{1+k^2\eta_{\rho}^2}(\eta_{il}\eta_{jt}-\eta_{it}\eta_{jl});\\
			E^R_{i\rho lt}=&0;\\
			E^R_{i\rho l\rho}=&\frac{k^2\eta_{\rho\rho}\eta_{il}}{1+k^2\eta_{\rho}^2};\\
			F_{ij}=&-\frac{k^2\eta_{\rho}^2R_{i\rho j\rho}}{1+k^2\eta_{\rho}^2}+\frac{Ck^2(\eta_{ANG}\eta_{ANG})_{ij}}{1+k^2\eta_{\rho}^2}+\frac{k^2\eta_{\rho\rho}^2\eta_{ij}}{(1+k^2\eta_{\rho}^2)^2};\\
			F_{i\rho}=&0;\\
			F_{\rho\rho}=&\frac{k^2\eta^i_{i}\eta_{\rho\rho}}{1+k^2\eta_{\rho}^2};\\
			H=&-\frac{2k^2}{1+k^2\eta_{\rho}^2}R_{\rho\rho}\eta_{\rho}^2+\frac{k^2}{1+k^2\eta_{\rho}^2}\pa{(\Delta\eta)^2-\abs{\hess (\eta)}^2}-\frac{2k^4}{(1+k^2\eta_{\rho}^2)^2}\pa{\Delta \eta\eta_{\rho}^2\eta_{\rho\rho}-\eta_{\rho}^2\eta_{\rho\rho}^2}\\
			=&L+\frac{L}{1+k^2\eta_{\rho}^2}+\frac{C k^2\eta_{ANG}\eta_{ANG}}{1+k^2\eta_{\rho}^2}+\frac{2k^2\eta_{\rho\rho}\eta_{ll}}{(1+k^2\eta_{\rho}^2)^2}.
		\end{align*}
		We use the notation $\eta_{ANG}$ to denote terms of the type $\eta_{ij}$, where $i,j$ are angular coordinates (with this notation, we have coupled the angular terms $\eta_{ll}\eta_{ij}$ and $\eta_{il}\eta^l_j$ in $F_{ij}$ and the terms $\eta_{ll}^2$ and $\eta_{ij}\eta_{ij}$ in $H$, since for small $r$, they have the same behavior).	Then, by \eqref{estimeta} and \eqref{stime eta pt1} we get:
		\begin{align}\label{resti bar mod}
			\abs{E^R_{ijlt}}=&\abs{\frac{k^2}{1+k^2\eta_{\rho}^2}(\eta_{il}\eta_{jt}-\eta_{it}\eta_{jl})}\leq\abs{\frac{k^2Lr^2\eta_\rho^2}{k^2\eta_{\rho}^2}}\leq Cr^2\leq C;\\
			\abs{E^R_{i\rho lt}}=&0;\notag\\
			\abs{E^R_{i\rho l\rho}}=&\abs{\frac{k^2\eta_{\rho\rho}\eta_{il}}{1+k^2\eta_{\rho}^2}}\leq\abs{\frac{k^2L}{k^2\eta_{\rho}^2}\frac{r\eta_{\rho}^2}{r^2}}\leq\frac{C}{r}\leq C\Theta;\notag\\
			\abs{F_{ij}}=&\abs{-\frac{k^2\eta_{\rho}^2R_{i\rho j\rho}}{1+k^2\eta_{\rho}^2}+\frac{Ck^2(\eta_{ANG}\eta_{ANG})_{ij}}{1+k^2\eta_{\rho}^2}+\frac{k^2\eta_{\rho\rho}^2\eta_{ij}}{(1+k^2\eta_{\rho}^2)^2}}\notag\\
			\leq& \abs{\frac{Lk^2\eta_{\rho}^2}{k^2\eta_{\rho}^2}}+\abs{\frac{Lk^2\eta_{\rho}^2r^2}{k^2\eta_{\rho}^2}}+\abs{\frac{k^2L}{r^2\frac{k^4}{r^4}\pa{\frac{r^2}{k^2}+\frac14\eta^{-2}(y')^2}^2}}\leq C+C+\frac{C}{k^2}\Theta
			&\leq C\pa{1+\frac{1}{k^2}\Theta};\notag\\
			\abs{F_{i\rho}}=&0;\notag\\
			\abs{F_{\rho\rho}}=&\abs{\frac{k^2\eta^i_{i}\eta_{\rho\rho}}{1+k^2\eta_{\rho}^2}}\leq C\Theta;\notag\\
			\abs{H}=&\abs{L+\frac{L}{1+k^2\eta_{\rho}^2}+\frac{C k^2\eta_{ANG}\eta_{ANG}}{1+k^2\eta_{\rho}^2}+\frac{2k^2\eta_{\rho\rho}\eta_{ll}}{(1+k^2\eta_{\rho}^2)^2}}\notag\\
			\leq& C+C+C+r^2\frac{C}{k^2}\Theta\leq C\pa{1+\frac{1}{k^2}\Theta}.\notag
		\end{align}
		Note that we have not used \eqref{eta rho rho e eta rho} and \eqref{eta ang ed eta rho}, however il will be useful to have an estimate of $F_{\rho\rho}$ in terms of $\frac{1}{r-\rho}$ (since we are going to use it later to compute some of the remainders of $\ol{\Delta}\ol{\ric}$):
		\begin{align}\label{F rho rho alternative}
			\abs{F_{\rho\rho}}\leq\abs{\frac{k^2\eta_\rho^2\frac{r}{r-\rho}}{r\pa{1+k^2\eta_{\rho}^2}}}\leq \frac{C}{r-\rho}.
		\end{align}
		It follows
		\begin{align}\label{resti bar}
			\abs{E^R_{ijlt}}_{\gb}=&\abs{E^R_{ijlt}}_g\leq C\abs{E^R_{ijlt}}\leq Cr^2\leq C;\\
			\abs{E^R_{i\rho lt}}_{\gb}=&0;\notag\\
			\abs{E^R_{i\rho l\rho}}_{\gb}=&\frac{1}{1+k^2\eta_{\rho}^2}\abs{\frac{k^2\eta_{\rho\rho}\eta_{il}}{1+k^2\eta^2_{\rho}}}_g\leq\frac{C}{1+k^2\eta_{\rho}^2}\abs{\frac{k^2\eta_{\rho\rho}\eta_{il}}{1+k^2\eta^2_{\rho}}}\leq\frac{k^2 C}{r^2\frac{k^4}{r^4}\pa{\frac{r^2}{k^2}+\frac{\eta^{-2}}{4}(y')^2}^2}\leq r^2\frac{C}{k^2}\Theta\leq\frac{C}{k^2}\Theta;\notag\\
			\abs{F_{ij}}_{\gb}=& \abs{F_{ij}}_g\leq C\abs{F_{ij}}\leq C\pa{1+\frac{1}{k^2}\Theta};\notag\\
			\abs{F_{i\rho}}_{\gb}=&0;\notag\\
			\abs{F_{\rho\rho}}_{\gb}=&\frac{1}{1+k^2\eta_{\rho}^2}\abs{\frac{k^2\eta^i_{i}\eta_{\rho\rho}}{1+k^2\eta_{\rho}^2}}_g=\frac{1}{1+k^2\eta_{\rho}^2}\abs{\frac{k^2\eta^i_{i}\eta_{\rho\rho}}{1+k^2\eta_{\rho}^2}}\leq \frac{C}{k^2}\Theta;\notag\\
			\abs{H}_{\gb}=&\abs{H}_g=\abs{H}\leq C\pa{1+\frac{1}{k^2}\Theta}.\notag
		\end{align}
		As a consequence, exploiting \eqref{resti bar}, a straightforward computation proves that the terms of \eqref{barbach} in which $\riem,\ric$ and $S$ appear satisfy \eqref{ineq to prove}. For instance, 
		\begin{align}\label{SRic}
			\abs{\ol{S}\,\ol{\ric}}_{\gb}=&\abs{SR_{\alpha\beta}+HR_{\alpha\beta}+SF_{\alpha\beta}+HF_{\alpha\beta}}_{\gb}\\
			=&\abs{SR_{ij}+HR_{ij}+SF_{ij}+HF_{ij}+SR_{i\rho}+HR_{i\rho}+SR_{\rho\rho}+HR_{\rho\rho}+SF_{\rho\rho}+HF_{\rho\rho}}_{\gb}\notag\\
			\leq&\abs{S}_{\gb}\abs{R_{ij}}_{\gb}+\abs{H}_{\gb}\abs{R_{ij}}_{\ol{g}}+\abs{S}_{\gb}\abs{F_{ij}}_{\gb}+\abs{H}_{\gb}\abs{F_{ij}}_{\gb}+\abs{S}_{\gb}\abs{R_{i\rho}}_{\gb}+\abs{H}_{\gb}\abs{R_{i\rho}}_{\gb}\notag\\
			&+\abs{S}_{\gb}\abs{R_{\rho\rho}}_{\gb}+\abs{H}_{\gb}\abs{R_{\rho\rho}}_{\gb}+\abs{S}_{\gb}\abs{F_{\rho\rho}}_{\gb}+\abs{H}_{\gb}\abs{F_{\rho\rho}}_{\gb}\notag\\
			\leq& C+C\pa{1+\frac{1}{k^2}\Theta}+C\pa{1+\frac{1}{k^2}\Theta}+C\pa{1+\frac{1}{k^2}\Theta}+C+C\pa{1+\frac{1}{k^2}\Theta}\notag\\
			&+C+C\pa{1+\frac{1}{k^2}\Theta}+\frac{C}{k^2}\Theta+\frac{C}{k^2}\Theta\notag\\
			\leq& C\pa{1+\frac{1}{k^2}\Theta},\notag
		\end{align}
		A similar computation shows that
		\begin{align*}
			\abs{\ol{\ric}}_{\gb}\leq C\pa{1+\frac{1}{k^2}\Theta};\qquad\abs{\ol{S}}_{\gb}\leq C\pa{1+\frac{1}{k^2}\Theta}
		\end{align*}
		and, in particular:
		\begin{align}\label{bach pezzi}
			\begin{cases}
				\abs{\ol{\ric}*\ol{\riem}}_{\gb}\leq C\pa{1+\frac{1}{k^2}\Theta};\\
				\abs{\pa{\abs{\ol{\ric}}_{\gb}-\frac{\ol{S}^2}{3}}\gb}_{\gb}\leq C\pa{1+\frac{1}{k^2}\Theta}.
			\end{cases}
		\end{align}
		Thus, it remains to analyze the covariant derivatives of $\ol{\ric}$ and $\ol{S}$. 	Let
		\begin{align}\label{def G}
			G^\alpha_{\beta\gamma}:=\ol{\Gamma}^{\alpha}_{\beta\gamma}-\Gamma^\alpha_{\beta\gamma}=\frac{k^2\eta^\alpha\eta_{\beta\gamma}}{1+k^2\eta_\rho^2},
		\end{align}
		then by \eqref{stime eta pt1}, \eqref{estimeta} and \eqref{def G} we deduce
		\begin{align}\label{G}
			G^i_{jk}&=G^i_{j\rho}=G^i_{\rho\rho}=0\notag;\\
			\abs{G^\rho_{ij}}&=\abs{\frac{k^2\eta_\rho\eta_{ij}}{1+k^2\eta_\rho^2}}\leq \frac{k^2Cr\abs{\eta_{\rho}}^2}{1+k^2\abs{\eta_{\rho}}^2}\leq r C;\\
			\abs{G^\rho_{\rho\rho}}&=
			\abs{\frac{k^2\eta_{\rho}\eta_{\rho\rho}}{1+k^2\eta_\rho^2}}\leq\frac{k^2C}{r^2\frac{k^2}{r^2}\pa{\frac{r^2}{k^2}-\frac{\eta^{-2}}{4}\abs{y'}^2}}\leq C\Theta,\notag
		\end{align}
		
		By definition, we need to compute
		\begin{align}\label{ricci lap}
			\bar{\Delta}\ol{\ric}&=\ol{g}^{\rho\rho}\ol{\nabla}_\rho\ol{\nabla}_{\rho}(\ol{R}_{\rho\rho}+\ol{R}_{i\rho}+\ol{R}_{ij})+\ol{g}^{lt}\ol{\nabla}_l\ol{\nabla}_t(\ol{R}_{\rho\rho}+\ol{R}_{i\rho}+\ol{R}_{ij})\\
			&=\frac{1}{1+k^2\eta_{\rho}^2}\ol{\nabla}_\rho\ol{\nabla}_{\rho}(\ol{R}_{\rho\rho}+\ol{R}_{i\rho}+\ol{R}_{ij})+g^{lt}\ol{\nabla}_l\ol{\nabla}_t(\ol{R}_{\rho\rho}+\ol{R}_{i\rho}+\ol{R}_{ij}),\notag
		\end{align}
		where
		\begin{align}\label{nabla ricci}
			\ol{\nabla}_{\gamma}\ol{R}_{\alpha\beta}=\nabla_{\gamma}R_{\alpha\beta}-G^\delta_{\gamma\alpha}R_{\delta\beta}-G^\delta_{\gamma\beta}R_{\alpha\delta}-G^\delta_{\gamma\alpha}F_{\delta\beta}-G^\delta_{\gamma\beta}F_{\alpha\delta}
		\end{align}
		and
		\begin{align}\label{nabla nabla ricci}
			\ol{\nabla}_\sigma\ol{\nabla}_\gamma\ol{R}_{\alpha\beta}=&\nabla_{\sigma}\nabla_{\gamma}R_{\alpha\beta}+\nabla_{\sigma}\nabla_{\gamma}F_{\alpha\beta}-G^\tau_{\sigma\alpha}(\nabla_\gamma R_{\tau\beta})-G^\tau_{\sigma\beta}(\nabla_\gamma R_{\alpha\tau})-G^\tau_{\sigma\gamma}(\nabla_\tau R_{\alpha\beta})\\
			&-G^\tau_{\sigma\alpha}(\nabla_\gamma F_{\tau\beta})-G^\tau_{\sigma\beta}(\nabla_\gamma F_{\alpha\tau})-G^\tau_{\sigma\gamma}(\nabla_\tau F_{\alpha\beta})\notag\\
			&-\partial_{\sigma}G^\delta_{\gamma\alpha}R_{\delta\beta}-\partial_{\sigma}G^\delta_{\gamma\beta}R_{\alpha\delta}-\partial_{\sigma}G^\delta_{\gamma\alpha}F_{\delta\beta}-\partial_{\sigma}G^\delta_{\gamma\beta}F_{\alpha\delta}\notag\\
			&-G^\delta_{\gamma\alpha}\partial_{\sigma}R_{\delta\beta}-G^\delta_{\gamma\beta}\partial_{\sigma}R_{\alpha\delta}-G^\delta_{\gamma\alpha}\partial_{\sigma}F_{\delta\beta}-G^\delta_{\gamma\beta}\partial_{\sigma}F_{\alpha\delta}+\Gamma^\tau_{\sigma\gamma}(G^\delta_{\tau\alpha}R_{\delta\beta})\notag\\
			&+\Gamma^\tau_{\sigma\alpha}(G^\delta_{\gamma\tau}R_{\delta\beta})+\Gamma^\tau_{\sigma\beta}(G^\delta_{\gamma\alpha}R_{\delta\tau})+\Gamma^\tau_{\sigma\gamma}(G^\delta_{\tau\beta}R_{\alpha\delta})+\Gamma^\tau_{\sigma\beta}(G^\delta_{\gamma\tau}R_{\alpha\delta})\notag\\
			&+\Gamma^\tau_{\sigma\alpha}(G^\delta_{\gamma\beta}R_{\tau\delta})+\Gamma^\tau_{\sigma\gamma}(G^\delta_{\tau\alpha}F_{\delta\beta})+\Gamma^\tau_{\sigma\alpha}(G^\delta_{\gamma\tau}F_{\delta\beta})+\Gamma^\tau_{\sigma\beta}(G^\delta_{\gamma\alpha}F_{\delta\tau})\notag\\
			&+\Gamma^\tau_{\sigma\gamma}(G^\delta_{\tau\beta}F_{\alpha\delta})+\Gamma^\tau_{\sigma\beta}(G^\delta_{\gamma\tau}F_{\alpha\delta})+\Gamma^\tau_{\sigma\alpha}(G^\delta_{\gamma\beta}F_{\tau\delta}).\notag
		\end{align}

	Now we give an estimate of $\abs{\partial_{\delta} G^\alpha_{\beta\gamma}}$ in terms of suitable $C$ and $\Theta$, where $C$ is a constant and $\Theta=\Theta(1/k,r,p)$ denotes a continuous function depending on $1/k,r$ and $p$; to do so we use the bounds on $\eta_{\rho},\eta_{\rho\rho},\eta_{ij}$ and on their partial derivatives in $\rho$ together with
	\begin{align}\label{stime christoffel}
		\abs{\Gamma^\rho_{ij}}\leq C\rho;&& \abs{\Gamma^i_{\rho j}}\leq C\rho;&& \abs{\Gamma^i_{jt}}\leq C\rho^2; &&\Gamma^{\rho}_{\rho i}=\Gamma^i_{\rho\rho}=\Gamma^{\rho}_{\rho\rho}=0.
	\end{align}
	Then, using \eqref{stime eta pt1} and \eqref{estimeta} we have 
	\begin{align*}
		\abs{\partial_\rho G^\rho_{ij}}&=\abs{\frac{k^2\eta_{\rho\rho}\eta_{ij}}{1+k^2\eta_\rho^2}+\frac{k^2\eta_{\rho}\partial_{\rho}\eta_{ij}}{1+k^2\eta_\rho^2}-\frac{k^42\eta_{\rho}^2\eta_{\rho\rho}\eta_{ij}}{(1+k^2\eta_\rho^2)^2}}\\
		&\leq\frac{Ck^2}{r^2\frac{k^2}{r^2}\abs{\frac{r^2}{k^2}-\frac{\eta^{-2}}{4}(y')^2}}+\frac{Ck^2}{r^2\frac{k^2}{r^2}\abs{\frac{r^2}{k^2}-\frac{\eta^{-2}}{4}(y')^2}}+\frac{Ck^4}{r^4\frac{k^4}{r^4}\abs{\frac{r^2}{k^2}-\frac{\eta^{-2}}{4}(y')^{2}}^2}\notag\\
		&\leq C\Theta+C\Theta+C\Theta\leq C\Theta;\\
		\abs{\partial_lG^\rho_{ij}}&=\abs{\frac{k^2\eta_\rho\partial_l\eta_{ij}}{1+k^2\eta_\rho^2}}\leq\abs{\frac{k^2r\eta_{\rho}^2}{k^2\eta_{\rho}^2}}\leq Cr\leq C;\\
		\abs{\partial_{\rho}G^\rho_{\rho\rho}}&=\abs{\frac{k^2\eta_{\rho\rho}^2}{1+k^2\eta_{\rho}^2}+\frac{k^2\eta_{\rho}\eta_{\rho\rho\rho}}{1+k^2\eta_{\rho}^2}-\frac{2k^4\eta_{\rho}^2\eta_{\rho\rho}^2}{(1+k^2\eta_{\rho}^2)^2}}\\
		&\leq	\frac{k^2C}{r^4\frac{k^2}{r^2}\abs{\frac{r^2}{k^2}-\frac{\eta^{-2}}{4}(y')}}+\frac{k^2C}{r^4\frac{k^2}{r^2}\abs{\frac{r^2}{k^2}-\frac{\eta^{-2}}{4}(y')}}+\frac{k^4C}{r^6\frac{k^4}{r^4}\pa{\frac{r^2}{k^2}-\frac{\eta^{-2}}{4}(y')}^2}\notag\\
		&\leq\frac{C}{r^2}\Theta+\frac{C}{r^2}\Theta+\frac{C}{r^2}\Theta\leq C\Theta;\\
		\partial_lG^\rho_{\rho\rho}&=\partial_l G^i_{jt}=\partial_\rho G^i_{jt}=\partial_\rho G^i_{j\rho}=\partial_lG^i_{j\rho}=\partial_\rho G^i_{\rho\rho}=\partial_l G^i_{\rho\rho}=\partial_{\rho}G^\rho_{\rho i}=\partial_lG^\rho_{\rho i}=0.
	\end{align*}
	We start computing $\abs{\partial_{\gamma}F_{\alpha\beta}}$ and $\abs{\nabla_{\gamma}F_{\alpha\beta}}$: we use \eqref{stime eta pt1} and \eqref{estimeta} in order to obtain
	\begin{align}\label{partial F}
		\abs{\partial_{\rho}F_{\rho\rho}}=&\abs{\frac{k^2\eta_{\rho\rho\rho}\eta^i_i}{1+k^2\eta_{\rho}^2}+\frac{k^2\eta_{\rho\rho}\partial_{\rho}\eta^i_i}{1+k^2\eta_{\rho}^2}-\frac{2k^4\eta_{\rho}\eta_{\rho\rho}^2\eta^i_i}{(1+k^2\eta_{\rho}^2)^2}}\\
		\leq&\frac{C}{r}\Theta+\frac{C}{r}\Theta+\frac{C}{r}\Theta\leq C\Theta;\notag\\
		\abs{\partial_{l}F_{\rho\rho}}=&\abs{\frac{k^2\eta_{\rho\rho}\partial_l\eta^i_i}{1+k^2\eta_{\rho}^2}}\leq\frac{k^2C}{r^2\frac{k^2}{r^2}\abs{\frac{r^2}{k^2}+\frac{\eta^{-2}}{4}(y')^2}}=C\Theta;\notag\\
		\abs{\partial_{\rho}F_{ij}}=&
		\Bigg|-\frac{k^2\eta_{\rho}\eta_{\rho\rho}L}{1+k^2\eta_{\rho}^2}-\frac{k^2\eta_{\rho}^2\partial_{\rho} L}{1+k^2\eta_{\rho}^2}+\frac{2k^4\eta_{\rho}^3\eta_{\rho\rho}L}{(1+k^2\eta_{\rho}^2)^2}+\frac{Ck^2(\partial_{\rho}\eta_{ANG}\eta_{ANG})_{ij}}{1+k^2\eta_{\rho}^2}+\frac{Ck^4\eta_{\rho}\eta_{\rho\rho}(\eta_{ANG}\eta_{ANG})_{ij}}{(1+k^2\eta_{\rho}^2)^2}\notag\\
		&+\frac{k^2\eta_{\rho\rho\rho}\eta_{ij}}{(1+k^2\eta_{\rho}^2)^2}+\frac{k^2\eta_{\rho\rho}\partial_{\rho}\eta_{ij}}{(1+k^2\eta_{\rho}^2)^2}-\frac{8k^4\eta_{\rho}\eta_{\rho\rho}^2\eta_{ij}}{(1+k^2\eta_{\rho}^2)^3}\Bigg|\notag\\	
		=&\Bigg|-\frac{ k^2\eta_{\rho}\eta_{\rho\rho}L}{(1+k^2\eta_{\rho}^2)^2}-\frac{ k^2\eta_{\rho}^2\partial_{\rho} L}{1+k^2\eta_{\rho}^2}+\frac{C k^2(\partial_{\rho}\eta_{ANG}\eta_{ANG})_{ij}}{1+k^2\eta_{\rho}^2}+\frac{ Ck^4\eta_{\rho}\eta_{\rho\rho}(\eta_{ANG}\eta_{ANG})_{ij}}{(1+k^2\eta_{\rho}^2)^2}\notag\\
		&+\frac{ k^2\eta_{\rho\rho\rho}\eta_{ij}}{(1+k^2\eta_{\rho}^2)^2}+\frac{ k^2\eta_{\rho\rho}\partial_{\rho}\eta_{ij}}{(1+k^2\eta_{\rho}^2)^2}-\frac{ 8k^4\eta_{\rho}\eta_{\rho\rho}^2\eta_{ij}}{(1+k^2\eta_{\rho}^2)^3}\Bigg|\notag\\
		\leq& \frac{C}{k^2}\Theta+C+C\Theta+C\Theta+\frac{C}{k^2}\Theta+\frac{C}{k^2}\Theta+\frac{C}{k^2}\Theta\leq C\pa{1+\Theta}
		\notag\\
		\abs{\partial_{l}F_{ij}}=&\abs{-\frac{k^2\eta_{\rho}^2\partial_lR_{i\rho j\rho}}{1+k^2\eta_{\rho}^2}+\frac{Ck^2(\partial_l\eta_{ANG}\eta_{ANG})_{ij}}{1+k^2\eta_{\rho}^2}+\frac{k^2\eta_{\rho\rho}\partial_l\eta_{ij}}{(1+k^2\eta_{\rho}^2)^2}}\leq C+C+\frac{C}{k^2}\Theta\leq C\pa{1+\frac{1}{k^2}\Theta};\notag\\
		\partial_\rho F_{i\rho}=&\partial_{l}F_{i\rho}=0.\notag
	\end{align}
	Note that in the above estimates we have not used \eqref{eta rho rho e eta rho} and \eqref{eta ang ed eta rho}, although it is useful to exploit them and $\abs{\eta_{ij}}\leq Cr\abs{\eta_{\rho}},\, \abs{\partial_l\eta_{ij}}\leq Cr\abs{\eta_{\rho}}$ (see \eqref{stime eta}) to estimate $\abs{\partial_l F_{\rho\rho}}$ and $\abs{\partial_{\rho}F_{ij}}$:
	\begin{align}\label{oter est partial F}
		\begin{cases}
			\abs{\partial_l F_{\rho\rho}}&\leq\frac{C}{r-\rho};\\
			\abs{\partial_{\rho}F_{ij}}&\leq C+\frac{C}{r-\rho}+\frac{C}{k^2}\Theta.
		\end{cases}
	\end{align}
	Combining \eqref{resti bar mod}, \eqref{stime christoffel} with \eqref{partial F} we deduce
	\begin{align}\label{nabla F}
		\abs{\nabla_{\rho}F_{\rho\rho}}&=\abs{\partial_{\rho}F_{\rho\rho}}\leq C\Theta;\\
		\abs{\nabla_{l}F_{\rho\rho}}&=\abs{\partial_{l}F_{\rho\rho}}\leq C\Theta;\notag\\
		\abs{\nabla_{\rho}F_{ij}}&=\abs{\partial_{\rho}F_{ij}-\Gamma^l_{\rho i}F_{lj}-\Gamma^l_{\rho j}F_{il}}\notag\\
		&\leq \abs{\partial_{\rho}F_{ij}}+\abs{\Gamma^l_{\rho i}F_{lj}}+\abs{\Gamma^l_{\rho j}F_{il}}\notag\\
		&\leq \abs{\partial_{\rho}F_{ij}}+\abs{\Gamma^l_{\rho i}}\abs{F_{lj}}+\abs{\Gamma^l_{\rho j}}\abs{F_{il}}\notag\\
		&\leq C\pa{1+\Theta}+ C\rho\pa{1+\frac{1}{k^2}}+ C\rho\pa{1+\frac{1}{k^2}}\leq C\pa{1+\Theta};\notag\\
		\abs{\nabla_{l}F_{ij}}&=\abs{\partial_{l}F_{ij}-\Gamma^t_{li}F_{tj}-\Gamma^t_{lj}F_{it}}\notag\\
		&\leq\abs{\partial_{l}F_{ij}}+\abs{\Gamma^t_{li}F_{tj}}+\abs{\Gamma^t_{lj}F_{it}}\notag\\
		&\leq \abs{\partial_{l}F_{ij}}+\abs{\Gamma^t_{li}}\abs{F_{tj}}+\abs{\Gamma^t_{lj}}\abs{F_{it}}\notag\\
		&\leq C\pa{1+\frac{1}{k^2}\Theta}+ C\rho\pa{1+\frac{1}{k^2}\Theta}+ C\rho\pa{1+\frac{1}{k^2}\Theta}\leq C\pa{1+\frac{1}{k^2}\Theta};\notag\\
		\abs{\nabla_{l}F_{i\rho}}&=\abs{-\Gamma^\rho_{ij}F_{\rho\rho}-\Gamma^l_{j\rho}F_{il}}\notag\\
		&\leq\abs{\Gamma^\rho_{ij}}\abs{F_{\rho\rho}}+\abs{\Gamma^l_{j\rho}}\abs{F_{il}}\notag\\
		&\leq C\rho\Theta+C\rho\pa{1+\frac{1}{k^2}\Theta}\leq C\pa{1+\Theta};\notag\\
		\nabla_{\rho}F_{i\rho}&=0.\notag
	\end{align}
	Note that using \eqref{oter est partial F}, we obtain
	\begin{align}\label{nabla F 2}
		\begin{cases}
			\abs{\nabla_{l}F_{\rho\rho}}\leq\frac{C}{r-\rho};\\
			\abs{\nabla_{\rho}F_{ij}}\leq C+\frac{C}{r-\rho}+\frac{C}{k^2}\Theta;\\
			\abs{\nabla_lF_{i\rho}}\leq C+\frac{C}{r-\rho}+\frac{C}{k^2}\Theta.
		\end{cases}
			\end{align} 
	We are now ready to compute $\abs{\partial_{\gamma}\partial_{\delta}F_{\alpha\beta}}$ and $\abs{\nabla_{\gamma}\nabla_{\delta}F_{\alpha\beta}}$: using \eqref{stime eta pt1}, \eqref{estimeta} and the definition of $F_{\alpha\beta}$ we get
	\begin{align}\label{derivata seconda}
		\abs{\partial_{\rho}\partial_\rho F_{\rho\rho}}=&\Bigg|\frac{k^2\eta_{\rho\rho\rho\rho}\eta^i_i}{1+k^2\eta_{\rho}^2}+\frac{k^2\eta_{\rho\rho\rho}\partial_\rho\eta^i_i}{1+k^2\eta_{\rho}^2}-\frac{2k^4\eta_{\rho}\eta_{\rho\rho}\eta_{\rho\rho\rho}\eta^i_i}{(1+k^2\eta_{\rho}^2)^2}+\frac{k^2\eta_{\rho\rho\rho}\partial_{\rho}\eta^i_i}{1+k^2\eta_{\rho}^2}\\
		&+\frac{k^2\eta_{\rho\rho}\partial_{\rho}\partial_{\rho}\eta^i_i}{1+k^2\eta_{\rho}^2}-\frac{2k^4\eta_{\rho}\eta_{\rho\rho}^2\partial_\rho\eta^i_i}{(1+k^2\eta_{\rho}^2)^2}-\frac{2k^4\eta_{\rho\rho}^3\eta^i_i}{(1+k^2\eta_{\rho}^2)^2}\notag\\
		&-\frac{4k^4\eta_{\rho}\eta_{\rho\rho}\eta_{\rho\rho\rho}\eta^i_i}{(1+k^2\eta_{\rho}^2)^2}-\frac{2k^4\eta_{\rho}\eta_{\rho\rho}^2\partial_\rho\eta^i_i}{(1+k^2\eta_{\rho}^2)^2}+\frac{8k^6\eta_{\rho}^2\eta_{\rho\rho}^3\eta^i_i}{(1+k^2\eta_{\rho}^2)^3}\Bigg|\notag\\
		\leq& C\Theta+C\Theta+C\Theta+C\Theta+C\Theta+C\Theta+C\Theta+C\Theta+C\Theta+C\Theta\leq C\Theta\notag\\
		\abs{\partial_t\partial_l F_{\rho\rho}}=&\abs{\frac{k^2\eta_{\rho\rho}\partial_t\partial_{l}\eta^i_i}{1+k^2\eta_{\rho}^2}}\leq C\Theta;\notag\\
		\abs{\partial_{\rho}\partial_\rho F_{ij}}=&\Bigg|\frac{k^2\eta_{\rho\rho}^2L}{1+k^2\eta_{\rho}^2}+\frac{k^2\eta_{\rho}\eta_{\rho\rho\rho}L}{1+k^2\eta_{\rho}^2}+\frac{k^4\eta_{\rho}^2\eta_{\rho\rho}^2L}{(1+k^2\eta_{\rho}^2)^2}+\frac{k^2\eta_{\rho}\eta_{\rho\rho}\partial_\rho L}{1+k^2\eta_{\rho}^2}+\frac{k^2\eta_{\rho}^2\partial_{\rho}\partial_{\rho}L}{1+k^2\eta_{\rho}^2}\notag\\
		&+\frac{k^4\eta_{\rho}^3\eta_{\rho\rho}\partial_\rho L}{(1+k^2\eta_{\rho}^2)^2}+\frac{k^4\eta_{\rho}^2\eta_{\rho\rho}^2L}{(1+k^2\eta_{\rho}^2)^2}+\frac{k^4\eta_{\rho}^3\eta_{\rho\rho\rho}L}{(1+k^2\eta_{\rho}^2)^2}+\frac{k^6\eta_{\rho}^4\eta_{\rho\rho}^2L}{(1+k^2\eta_{\rho}^2)^3}+\frac{k^4\eta_{\rho}^2\eta_{\rho\rho}^2\partial_{\rho}L}{(1+k^2\eta_{\rho}^2)^2}\notag\\
		&+\frac{Ck^2(\partial_\rho\partial_{\rho}\eta_{ANG}\eta_{ANG})_{ij}}{1+k^2\eta_{\rho}^2}+\frac{Ck^2(\partial_{\rho}\eta_{ANG}\partial_{\rho}\eta_{ANG})_{ij}}{1+k^2\eta_{\rho}^2}-\frac{2Ck^4\eta_{\rho}\eta_{\rho\rho}(\eta_{ANG}\partial_{\rho}\eta_{ANG})_{ij}}{(1+k^2\eta_{\rho}^2)^2}\notag\\
		&+\frac{Ck^4\eta_{\rho\rho}^2(\eta_{ANG}\eta_{ANG})_{ij}}{(1+k^2\eta_{\rho}^2)^2}+\frac{Ck^4\eta_{\rho}\eta_{\rho\rho\rho}(\eta_{ANG}\eta_{ANG})_{ij}}{(1+k^2\eta_{\rho}^2)^2}+\frac{Ck^4\eta_{\rho}\eta_{\rho\rho}(\eta_{ANG}\partial_{\rho}\eta_{ANG})_{ij}}{(1+k^2\eta_{\rho}^2)^2}\notag\\
		&+\frac{Ck^6\eta_{\rho}^2\eta_{\rho\rho}^2(\eta_{ANG}\eta_{ANG})_{ij}}{(1+k^2\eta_{\rho}^2)^3}+\frac{k^2\eta_{\rho\rho\rho\rho}\eta_{ij}}{(1+k^2\eta_{\rho}^2)^2}+\frac{k^2\eta_{\rho\rho\rho}\partial_{\rho}\eta_{ij}}{(1+k^2\eta_{\rho}^2)^2}-\frac{2k^4\eta_{\rho}\eta_{\rho\rho}\eta_{\rho\rho\rho}\eta_{ij}}{(1+k^2\eta_{\rho}^2)^3}\notag\\
		&+\frac{k^2\eta_{\rho\rho\rho}\partial_{\rho}\eta_{ij}}{(1+k^2\eta_{\rho}^2)^2}+\frac{k^2\eta_{\rho\rho}\partial_{\rho}\partial_{\rho}\eta_{ij}}{(1+k^2\eta_{\rho}^2)^2}-\frac{2k^4\eta_{\rho}\eta_{\rho\rho}^2\partial_{\rho}\eta_{ij}}{(1+k^2\eta_{\rho}^2)^3}+\frac{Ck^4\eta_{\rho\rho}^3\eta_{ij}}{(1+k^2\eta_{\rho}^2)^3}\notag\\
		&+\frac{Ck^4\eta_{\rho}\eta_{\rho\rho}\eta_{\rho\rho\rho}\eta_{ij}}{(1+k^2\eta_{\rho}^2)^3}+\frac{Ck^4\eta_{\rho}\eta_{\rho\rho}^2\partial_{\rho}\eta_{ij}}{(1+k^2\eta_{\rho}^2)^3}+\frac{Ck^6\eta_{\rho}^2\eta_{\rho\rho}^3\eta_{ij}}{(1+k^2\eta_{\rho}^2)^4}\Bigg|\notag\\
		\leq& C\Theta+C\Theta+C\Theta+C\Theta+C+C\Theta+C\Theta+C\Theta+C\Theta+C\Theta+C\Theta\notag\\
		&+C\Theta+C\Theta+C\Theta+C\Theta+C\Theta+C\Theta+\frac{C}{k^2}\Theta+\frac{C}{k^2}\Theta+\frac{C}{k^2}\Theta+\frac{C}{k^2}\Theta
		\notag\\
		&+\frac{C}{k^2}\Theta+\frac{C}{k^2}\Theta+\frac{C}{k^2}\Theta+\frac{C}{k^2}\Theta+\frac{C}{k^2}\Theta+\frac{C}{k^2}\Theta\leq C\pa{1+\Theta};
		\notag\\
		\abs{\partial_t\partial_l F_{ij}}=&\Bigg|-\frac{k^2\eta_{\rho}^2\partial_t\partial_lR_{i\rho j\rho}}{1+k^2\eta_{\rho}^2}+\frac{2k^2(\partial_t\partial_l\eta_{ANG}\eta_{ANG})_{ij}}{(1+k^2\eta_{\rho}^2)^2}+\frac{2k^2\eta_{\rho\rho}^2(\partial_t\partial_l\eta_{ANG}\eta_{ANG})_{ij}}{(1+k^2\eta_{\rho}^2)^2}\notag\\
		&+\frac{2k^2(\partial_t\eta_{ANG}\partial_l\eta_{ANG})_{ij}}{(1+k^2\eta_{\rho}^2)^2}
		+\frac{2k^2\eta_{\rho\rho}^2(\partial_t\eta_{ANG}\partial_l\eta_{ANG})_{ij}}{(1+k^2\eta_{\rho}^2)^2}\Bigg|\notag\\
		&\leq C+C+\frac{C}{k^2}\Theta+C+\frac{C}{k^2}\Theta\notag\\
		&\leq C\pa{1+\frac{1}{k^2}\Theta};\notag\\
		\abs{\partial_{\rho}\partial_\rho F_{i\rho}}=&\abs{\partial_t\partial_l F_{i\rho}}=0.\notag
	\end{align}
	Using \eqref{resti bar mod}, \eqref{stime christoffel}, \eqref{partial F}, \eqref{nabla F} and \eqref{derivata seconda}, we deduce
	\begin{align}\label{nabla nabla F}
		\abs{\nabla_{\rho}\nabla_\rho F_{\rho\rho}}=&\abs{\partial_{\rho}\partial_\rho F_{\rho\rho}}\leq C\Theta;\\
		\abs{\nabla_t\nabla_l F_{\rho\rho}}=&\abs{\partial_t\nabla_l F_{\rho\rho}-\Gamma^\rho_{tl}(\nabla_{\rho}F_{\rho\rho})-\Gamma^i_{tl}(\nabla_{i}F_{\rho\rho})}=\abs{\partial_t\partial_l F_{\rho\rho}-\Gamma^\rho_{tl}(\nabla_{\rho}F_{\rho\rho})-\Gamma^i_{tl}(\nabla_{i}F_{\rho\rho})}\notag\\
		\leq& C\Theta+ C\Theta+\rho^2C\Theta\leq C\Theta;\notag\\
		\abs{\nabla_\rho\nabla_\rho F_{ij}}=&\abs{\partial_\rho\nabla_\rho F_{ij}-\Gamma^\alpha_{\rho i}\nabla_\rho F_{\alpha j}-\Gamma^\alpha_{\rho j}\nabla_{\rho}F_{i\alpha}}\notag\\
		=&\abs{\partial_\rho\pa{\partial_\rho F_{ij}-\Gamma^l_{\rho i}F_{lj}-\Gamma^l_{\rho j}F_{il}}-\Gamma^l_{\rho i}\nabla_\rho F_{lj}-\Gamma^l_{\rho j}\nabla_{\rho}F_{il}}\notag\\
		=&\abs{\partial_{\rho}\partial_{\rho}F_{ij}-\partial_{\rho}\Gamma^l_{\rho i}F_{lj}+\Gamma^l_{\rho i}\partial_{\rho}F_{lj}-\partial_{\rho}\Gamma^l_{\rho j}F_{il}+\Gamma^l_{\rho j}\partial_{\rho}F_{il}-\Gamma^l_{\rho i}\nabla_\rho F_{lj}-\Gamma^l_{\rho j}\nabla_{\rho}F_{il}}\notag\\
		\leq& C\pa{1+\Theta}+C\pa{1+\frac{1}{k^2}\Theta}+ C\rho\pa{1+\frac{1}{k^2}\Theta}+C\pa{1+\frac{1}{k^2}\Theta}\notag\\
		&+ C\rho\pa{1+\frac{1}{k^2}\Theta}+C\rho\pa{1+\Theta}+ C\rho\pa{1+\Theta}       \notag\\
		\leq&C\pa{1+\Theta};\notag\\
		\abs{\nabla_t\nabla_l F_{ij}}=&\big|\partial_t\nabla_lF_{ij}-\Gamma^s_{ti}(\nabla_l F_{sj})-\Gamma^s_{tj}(\nabla_l F_{si})-\Gamma^s_{tl}(\nabla_s F_{ij})\notag\\
		&-\Gamma^\rho_{tl}(\nabla_\rho F_{ij})-\Gamma^\rho_{ti}(\nabla_lF_{\rho j})-\Gamma^\rho_{tj}(\nabla_lF_{i\rho})\big|\notag\\
		=&\big|\partial_t(\partial_{l}F_{ij}-\Gamma^s_{li}F_{sj}-\Gamma^s_{lj}F_{is})
		-\Gamma^s_{ti}(\nabla_l F_{sj})-\Gamma^s_{tj}(\nabla_l F_{si})\notag\\
		&-\Gamma^s_{tl}(\nabla_s F_{ij})-\Gamma^\rho_{tl}(\nabla_\rho F_{ij})-\Gamma^\rho_{ti}(\nabla_lF_{\rho j})-\Gamma^\rho_{tj}(\nabla_lF_{i\rho})\big|\notag\\
		=&\big|\partial_t\partial_{l}F_{ij}-\partial_t\Gamma^s_{li}F_{sj}-\Gamma^s_{li}\partial_tF_{sj}-\partial_t\Gamma^s_{lj}F_{is}-\Gamma^s_{lj}\partial_tF_{is}\notag\\
		&-\Gamma^s_{ti}(\nabla_l F_{sj})-\Gamma^s_{tj}(\nabla_l F_{si})-\Gamma^s_{tl}(\nabla_s F_{ij})\notag\\
		&-\Gamma^\rho_{tl}(\nabla_\rho F_{ij})-\Gamma^\rho_{ti}(\nabla_lF_{\rho j})-\Gamma^\rho_{tj}(\nabla_lF_{i\rho})\big|\notag\\
		\leq&
		\abs{\partial_t\partial_{l}F_{ij}}+\abs{\partial_t\Gamma^s_{li}F_{sj}}+\abs{\Gamma^s_{li}\partial_tF_{sj}}+\abs{\partial_t\Gamma^s_{lj}F_{is}}+\abs{\Gamma^s_{lj}\partial_tF_{is}}\notag\\
		&+\abs{\Gamma^s_{ti}(\nabla_l F_{sj})}+\abs{\Gamma^s_{tj}(\nabla_l F_{si})}+\abs{\Gamma^s_{tl}(\nabla_s F_{ij})}\notag\\
		&+\abs{\Gamma^\rho_{tl}(\nabla_\rho F_{ij})}+\abs{\Gamma^\rho_{ti}(\nabla_lF_{\rho j})}+\abs{\Gamma^\rho_{tj}(\nabla_lF_{i\rho})}\notag\\
		\leq& C\pa{1+\frac{1}{k^2}\Theta}+C\pa{1+\frac{1}{k^2}\Theta}+ C\rho\pa{1+\frac{1}{k^2}\Theta}\notag\\
		&+C\rho\pa{1+\frac{1}{k^2}\Theta}+\rho^2 C\pa{1+\frac{1}{k^2}\Theta}+\rho^2\pa{C+\frac{C}{k^2}\Theta}\notag\\
		&+\abs{\Gamma^\rho_{ll}(\nabla_\rho F_{ij})}+\abs{\Gamma^\rho_{li}(\nabla_lF_{\rho j})}+\abs{\Gamma^\rho_{lj}(\nabla_lF_{i\rho})}\notag\\
		\abs{\nabla_\rho\nabla_\rho F_{i\rho}}=&\abs{\partial_\rho\nabla_\rho F_{i\rho}-\Gamma^\alpha_{\rho i}\nabla_{\rho}F_{\alpha\rho}-\Gamma^\alpha_{\rho\rho}\nabla_\rho F_{i\alpha}-\Gamma^\alpha_{\rho\rho}\nabla_\alpha F_{i\rho}}=0;\notag\\
		\abs{\nabla_t\nabla_l F_{i\rho}}=&\abs{\partial_t\nabla_l F_{i\rho}-\Gamma^\alpha_{t i}\nabla_{\rho}F_{\alpha\rho}-\Gamma^\alpha_{t\rho}\nabla_\rho F_{i\alpha}-\Gamma^\alpha_{tl}\nabla_\alpha F_{i\rho}};\notag\\
		=&\abs{\partial_t\pa{\partial_lF_{i\rho}-\Gamma^\rho_{li}F_{\rho\rho}-\Gamma^j_{l\rho}F_{ij}}-\Gamma^\alpha_{t i}\nabla_{\rho}F_{\alpha\rho}-\Gamma^\alpha_{t\rho}\nabla_\rho F_{i\alpha}-\Gamma^\alpha_{tl}\nabla_\alpha F_{i\rho}}\notag\\
		=&\abs{\partial_t\Gamma^\rho_{li}F_{\rho\rho}-\Gamma^\rho_{li}\partial_tF_{\rho\rho}-\partial_t\Gamma^j_{l\rho}F_{ij}-\Gamma^j_{l\rho}\partial_tF_{ij}-\Gamma^j_{ti}\nabla_{l}F_{j\rho}-\Gamma_{ti}^\rho\nabla_{l}F_{\rho\rho}-\Gamma^j_{t\rho}\nabla_{l}F_{ij}-\Gamma^j_{tl}\nabla_j F_{i\rho}}\notag\\
		\leq & \abs{\partial_t\Gamma^\rho_{li}F_{\rho\rho}}+\abs{\Gamma^\rho_{li}\partial_tF_{\rho\rho}}+C\pa{1+\frac{1}{k^2}\Theta}+ C\rho\pa{1+\frac{1}{k^2}\Theta}+\abs{\Gamma^j_{ti}\nabla_{l}F_{j\rho}}\notag\\
		&+\abs{\Gamma_{ti}^\rho\nabla_{l}F_{\rho\rho}}+ C\rho\pa{1+\frac{1}{k^2}\Theta}+\abs{\Gamma^j_{tl}\nabla_j F_{i\rho}}; \notag
	\end{align}
	To obtain a final estimate for $\abs{\nabla_t\nabla_lF_{ij}}$ and $\abs{\nabla_t\nabla_lF_{i\rho}}$, we use equations \eqref{F rho rho alternative} and \eqref{nabla F 2} to finally deduce:
	\begin{align}\label{nab tl ij irho}
		\abs{\nabla_t\nabla_lF_{ij}}\leq C+\frac{C}{r-\rho}+\frac{C}{k^2}\Theta,\qquad \abs{\nabla_t\nabla_lF_{i\rho}}\leq C+\frac{C}{r-\rho}+\frac{C}{k^2}\Theta.
	\end{align}
	We are now ready to give an estimate of the components of the remainders of $\bar{\Delta}\ol{\ric}$:
	\begin{align}\label{components of lap ricci}
		\abs{\ol{\nabla}_{\rho}\ol{\nabla}_\rho\ol{R}_{\rho\rho}}=&\big|\nabla_\rho\nabla_\rho R_{\rho\rho}+\nabla_\rho\nabla_\rho F_{\rho\rho}-3G^\rho_{\rho\rho}\nabla_{\rho}R_{\rho\rho}\\
		&-3G^\rho_{\rho\rho}\nabla_{\rho}F_{\rho\rho}-2\partial_\rho G^\rho_{\rho\rho}R_{\rho\rho}-2\partial_\rho G^\rho_{\rho\rho}F_{\rho\rho}\big|\notag\\
		&\leq C+ C\Theta+ C\Theta+C\Theta+C\Theta+C\Theta\leq C\pa{1+\Theta}\notag;\\
		\abs{\ol{\nabla}_{t}\ol{\nabla}_l\ol{R}_{\rho\rho}}=&\big|\nabla_t\nabla_l R_{\rho\rho}+\nabla_t\nabla_{l}F_{\rho\rho}-G^\rho_{tl}\nabla_{\rho}R_{\rho\rho}-G^\rho_{tl}\nabla_{\rho}F_{\rho\rho}+2\Gamma^\rho_{tl}\pa{G^\rho_{\rho\rho}R_{\rho\rho}}\notag\\
		&+2\Gamma^s_{t\rho}\pa{G^\rho_{ls}R_{\rho\rho}}+2\Gamma^\rho_{tl}\pa{G^\rho_{\rho\rho}F_{\rho\rho}}+2\Gamma^s_{t\rho}\pa{G^\rho_{ls}F_{\rho\rho}}\big|\notag\\
		&\leq C+C\Theta+C+C\Theta+C\Theta+C+C\Theta+C\Theta\leq C\pa{1+\Theta};\notag\\
		\abs{\ol{\nabla}_{\rho}\ol{\nabla}_\rho\ol{R}_{ij}}=&\big|\nabla_\rho\nabla_\rho R_{ij}+\nabla_\rho\nabla_\rho F_{ij}-G^\rho_{\rho\rho}\nabla_\rho R_{ij}-G^\rho_{\rho\rho}\nabla_\rho F_{ij}\big|\notag\\
		\leq& C+C\pa{1+\Theta}+C\Theta+C\Theta\leq C\pa{1+\Theta};\notag\\
		\abs{\ol{\nabla}_{t}\ol{\nabla}_l\ol{R}_{ij}}=&\big|\nabla_t\nabla_l R_{ij}+\nabla_t\nabla_lF_{ij}-G^\rho_{ti}\nabla_lR_{\rho j}-G^\rho_{tj}\nabla_lR_{i\rho}-G^\rho_{tl}\nabla_\rho R_{ij}\notag\\
		&-G^\rho_{ti}\nabla_lF_{\rho j}-G^\rho_{tj}\nabla_lF_{i\rho}-G^\rho_{tl}\nabla_\rho F_{ij}\notag\\
		&-\partial_{t}G^\rho_{li}R_{\rho j}-\partial_{t}G^\rho_{lj}R_{i\rho}-G^\rho_{li}\partial_tR_{\rho j}-G^\rho_{lj}\partial_tR_{i\rho}\notag\\
		&+\Gamma_{tl}^s\pa{G^\rho_{si} R_{\rho j}}+\Gamma^s_{ti}\pa{G^\rho_{ls}R_{\rho j}}+\Gamma^\rho_{tj}\pa{G^\rho_{li}R_{\rho\rho}}+\Gamma^s_{tj}\pa{G^\rho_{li}R_{\rho s}}+\Gamma^s_{tl}\pa{G^\rho_{sj}R_{i\rho}}\notag\\
		&+\Gamma^s_{tj}\pa{G^\rho_{ls}R_{i\rho}}+\Gamma^\rho_{ti}\pa{G^\rho_{lj}R_{\rho\rho}}+\Gamma^s_{ti}\pa{G^\rho_{lj}R_{\rho s}}+\Gamma^\rho_{tj}\pa{G^\rho_{lj}F_{\rho\rho}}+\Gamma^\rho_{ti}\pa{G^\rho_{li}F_{\rho\rho}}\big|\notag\\
		&\leq C+ C\pa{1+\frac{1}{r-\rho}+\frac{1}{k^2}\Theta}+C+C+C+C\pa{1+\frac{1}{r-\rho}+\frac{1}{k^2}\Theta}\notag\\
		&+C\pa{1+\frac{1}{r-\rho}+\frac{1}{k^2}\Theta}+C\pa{1+\frac{1}{r-\rho}+\frac{1}{k^2}\Theta}+C\pa{1+\frac{1}{r-\rho}+\frac{1}{k^2}}\notag\\
		&+C+C+C+C+C+C+C+C+C+C+C+C+\frac{C}{r-\rho}+\frac{C}{r-\rho}\notag\\
		&\leq C+\frac{C}{k^2}\Theta+\frac{C}{r-\rho};\notag\\
		\abs{\ol{\nabla}_{\rho}\ol{\nabla}_\rho\ol{R}_{i\rho}}=&\big|\nabla_\rho\nabla_{\rho}R_{i\rho}-2G^\rho_{\rho\rho}\nabla_{\rho}R_{i\rho}-\partial_{\rho}G^\rho_{\rho\rho}R_{i\rho}-G^\rho_{\rho\rho}\partial_\rho R_{i\rho}+\Gamma^t_{\rho i}\pa{G^\rho_{\rho\rho}R_{t\rho}}\big|\notag\\
		&\leq C+C\Theta+C\Theta+C\Theta+C\Theta\leq C\pa{1+\Theta};\notag\\
		\abs{\ol{\nabla}_{t}\ol{\nabla}_l\ol{R}_{i\rho}}=&\big|\nabla_t\nabla_lR_{i\rho}-\nabla_t\nabla_lF_{i\rho}-G^\rho_{ti}\nabla_lR_{\rho\rho}-G^\rho_{tl}\nabla_{\rho}R_{i\rho}\notag\\
		&-G^\rho_{ti}\nabla_lF_{\rho\rho}-\partial_tG^\rho_{li}R_{\rho \rho}-\partial_tG^\rho_{li}F_{\rho \rho}-G^\rho_{li}\partial_tR_{\rho\rho}-G^\rho_{li}\partial_tF_{\rho\rho}\notag\\
		&+\Gamma^s_{tl}\pa{G^\rho_{si}R_{\rho\rho}}+\Gamma^s_{ti}\pa{G^\rho_{ls}R_{\rho\rho}}+\Gamma^s_{t\rho}\pa{G^\rho_{li}R_{\rho s}}+\Gamma^\rho_{tl}\pa{G^\rho_{\rho\rho}R_{i\rho}}\notag\\
		&+\Gamma^s_{t\rho}\pa{G^\rho_{ls}R_{i\rho}}+\Gamma^s_{tl}\pa{G^\rho_{si}F_{\rho\rho}}+\Gamma^s_{ti}\pa{G^\rho_{ls}F_{\rho\rho}}\big|\notag\\
		\leq& C+C\pa{1+\frac{1}{r-\rho}+\frac{1}{k^2}\Theta}+C+C+\frac{C}{r-\rho}\notag\\
		&+C\frac{C}{r-\rho}+C+C+C+C+C+\frac{C}{r-\rho}+\frac{C}{r-\rho}\notag\\
		\leq& C\pa{1+\frac{1}{r-\rho}+\frac{1}{k^2}\Theta}.\notag
	\end{align}
	Note that we have used equations \eqref{normgbarg}, \eqref{resti bar mod}, \eqref{stime christoffel}, \eqref{partial F}, \eqref{nabla F} and \eqref{nabla nabla F} 
	to estimate $\abs{\ol{\nabla}_{\rho}\ol{\nabla}_\rho\ol{R}_{\rho\rho}},\,\abs{\ol{\nabla}_{t}\ol{\nabla}_l\ol{R}_{\rho\rho}}\,\abs{\ol{\nabla}_{\rho}\ol{\nabla}_\rho\ol{R}_{ij}}$ and $\abs{\ol{\nabla}_{\rho}\ol{\nabla}_\rho\ol{R}_{i\rho}}$ and we have used
	\eqref{normgbarg}, \eqref{resti bar mod},
	\eqref{F rho rho alternative}, \eqref{stime christoffel}, \eqref{oter est partial F}, \eqref{nabla F 2} and \eqref{nab tl ij irho} to estimate $\abs{\ol{\nabla}_{t}\ol{\nabla}_l\ol{R}_{ij}}$ and $\abs{\ol{\nabla}_t\ol{\nabla}_l\ol{R}_{i\rho}}$.
	Therefore, using the equations in \eqref{components of lap ricci} in \eqref{nabla nabla ricci} we conclude
	\begin{align}\label{lapRic}
		\abs{\bar{\Delta}\ol{\ric}}_{\ol{g}}=&\abs{\frac{1}{1+k^2\eta_{\rho}^2}\ol{\nabla}_\rho\ol{\nabla}_{\rho}(\ol{R}_{\rho\rho}+\ol{R}_{i\rho}+\ol{R}_{ij})+g^{lt}\ol{\nabla}_l\ol{\nabla}_t(\ol{R}_{\rho\rho}+\ol{R}_{i\rho}+\ol{R}_{ij})}_{\gb}\\
		\leq&\frac{1}{1+k^2\eta_{\rho}^2}\abs{\ol{\nabla}_\rho\ol{\nabla}_{\rho}\ol{R}_{\rho\rho}}_{\gb}+\frac{1}{1+k^2\eta_{\rho}^2}\abs{\ol{\nabla}_\rho\ol{\nabla}_{\rho}\ol{R}_{i\rho}}_{\gb}+\frac{1}{1+k^2\eta_{\rho}^2}\abs{\ol{\nabla}_\rho\ol{\nabla}_{\rho}\ol{R}_{ij}}_{\gb}\notag\\
		&+\abs{g^{lt}\ol{\nabla}_l\ol{\nabla}_t\ol{R}_{\rho\rho}}_{\gb}+\abs{g^{lt}\ol{\nabla}_l\ol{\nabla}_t\ol{R}_{i\rho}}_{\gb}+\abs{g^{lt}\ol{\nabla}_l\ol{\nabla}_t\ol{R}_{ij}}_{\gb}\notag\\
		=&\frac{1}{(1+k^2\eta_{\rho}^2)^2}\abs{\ol{\nabla}_\rho\ol{\nabla}_{\rho}\ol{R}_{\rho\rho}}_g+\frac{1}{(1+k^2\eta_{\rho}^2)^{\frac{3}{2}}}\abs{\ol{\nabla}_\rho\ol{\nabla}_{\rho}\ol{R}_{i\rho}}_g+\frac{1}{1+k^2\eta_{\rho}^2}\abs{\ol{\nabla}_\rho\ol{\nabla}_{\rho}\ol{R}_{ij}}_g\notag\\
		&+\frac{1}{1+k^2\eta_{\rho}^2}\abs{g^{lt}\ol{\nabla}_l\ol{\nabla}_t\ol{R}_{\rho\rho}}_g+\frac{1}{(1+k^2\eta_{\rho}^2)^{\frac{1}{2}}}\abs{g^{lt}\ol{\nabla}_l\ol{\nabla}_t\ol{R}_{i\rho}}_g+\abs{g^{lt}\ol{\nabla}_l\ol{\nabla}_t\ol{R}_{ij}}_g\notag\\
		\leq&\frac{1}{(1+k^2\eta_{\rho}^2)^2}\abs{\ol{\nabla}_\rho\ol{\nabla}_{\rho}\ol{R}_{\rho\rho}}+\frac{C}{(1+k^2\eta_{\rho}^2)^{\frac{3}{2}}}\abs{\ol{\nabla}_\rho\ol{\nabla}_{\rho}\ol{R}_{i\rho}}+\frac{C}{1+k^2\eta_{\rho}^2}\abs{\ol{\nabla}_\rho\ol{\nabla}_{\rho}\ol{R}_{ij}}\notag\\
		&+\frac{1}{1+k^2\eta_{\rho}^2}\abs{g^{lt}\ol{\nabla}_l\ol{\nabla}_t\ol{R}_{\rho\rho}}+\frac{C}{(1+k^2\eta_{\rho}^2)^{\frac{1}{2}}}\abs{g^{lt}\ol{\nabla}_l\ol{\nabla}_t\ol{R}_{i\rho}}+C\abs{g^{lt}\ol{\nabla}_l\ol{\nabla}_t\ol{R}_{ij}}\notag\\
		\leq&\frac{1}{(1+k^2\eta_{\rho}^2)^2}C(1+\Theta)+\frac{1}{(1+k^2\eta_{\rho}^2)^{\frac{3}{2}}}C(1+\Theta)+\frac{1}{1+k^2\eta_{\rho}^2}C(1+\Theta)\notag\\
		&+\frac{1}{1+k^2\eta_{\rho}^2}C(1+\Theta)+\frac{1}{(1+k^2\eta_{\rho}^2)^{\frac{1}{2}}}C\pa{1+\frac{1}{r-\rho}+\frac{1}{k^2}\Theta}+C\pa{1+\frac{1}{r-\rho}+\frac{1}{k^2}\Theta}\notag\\
		\leq& C\pa{1+\frac{1}{k^2}\Theta}+C\pa{1+\frac{1}{k^2}\Theta}+C\pa{1+\frac{1}{k^2}\Theta}\notag\\
		&+C\pa{1+\frac{1}{k^2}\Theta}+C\pa{1+\frac{1}{r-\rho}+\frac{1}{k^2}\Theta}+C\pa{1+\frac{1}{r-\rho}+\frac{1}{k^2}\Theta}\notag\\
		\leq& C\pa{1+\frac{1}{r-\rho}+\frac{1}{k^2}\Theta}.\notag
	\end{align}
	Now we compute $\abs{\partial_{\alpha}H}$ using estimates \eqref{stime eta pt1} and \eqref{estimeta}:
	\begin{align}\label{partial H}
		\abs{\partial_{\rho}H}=&\Bigg|\partial_\rho L+\frac{\partial_\rho L}{1+k^2\eta_{\rho}^2}-\frac{2k^2L\eta_{\rho}\eta_{\rho\rho}}{(1+k^2\eta_{\rho}^2)^2}+\frac{Ck^2\partial_{\rho}\eta_{ANG}\eta_{ANG}}{1+k^2\eta_{\rho}^2}-\frac{Ck^4\eta_{\rho}\eta_{\rho\rho}\eta_{ANG}\eta_{ANG}}{(1+k^2\eta_{\rho}^2)^2}\\
		&+\frac{2k^2\eta_{\rho\rho\rho}\eta_{ll}}{(1+k^2\eta_{\rho}^2)^2}+\frac{2k^2\eta_{\rho\rho}\partial_\rho\eta_{ll}}{(1+k^2\eta_{\rho}^2)^2}-\frac{8k^4\eta_{\rho}\eta_{\rho\rho}^2\eta_{ll}}{(1+k^2\eta_{\rho}^2)^3}\notag\Bigg|\\
		\leq&C+C+C\Theta+C\Theta+C\Theta+\frac{C}{k^2}\Theta+\frac{C}{k^2}\Theta+\frac{C}{k^2}\Theta\notag\\
		\leq& C\pa{1+\Theta};\notag\\
		\abs{\partial_{i}H}=&\abs{\partial_iL+\frac{\partial_iL}{1+k^2\eta_{\rho}^2}+\frac{C k^2\partial_i\eta_{ANG}\eta_{ANG}}{1+k^2\eta_{\rho}^2}+\frac{2k^2\eta_{\rho\rho}\partial_i\eta_{ll}}{(1+k^2\eta_{\rho}^2)^2}}\notag\\
		\leq&C\pa{1+\frac{1}{k^2}\Theta}.\notag
	\end{align}
	Note that using \eqref{eta rho rho e eta rho}, \eqref{eta ang ed eta rho}, and the fact that $\abs{\eta_{ij}}\leq Cr\abs{\eta_{\rho}}$ we deduce another estimate for $\abs{\partial_{\rho}H}$, which we will use in the final inequality of $\abs{\ol{\nabla}_i\ol{\nabla}_j\ol{S}}$:
	\begin{align}\label{partial rho H 2}
		\abs{\partial_{\rho}H}\leq C+\frac{C}{k^2}\Theta+\frac{C}{r-\rho}.
	\end{align}
	Therefore, by
	\eqref{partial H} we obtain
	\begin{align}\label{nab scal}
		\abs{\nabla_{\rho}H}=\abs{\partial_\rho H}\leq C+C\Theta;\qquad
		\abs{\nabla_i H}=\abs{\partial_i H}\leq C+\frac{C}{k^2}\Theta
	\end{align}
	and by \eqref{partial rho H 2} 
	\begin{align}\label{nabla H rho 2}
		\abs{\nabla_{\rho}H}\leq C+\frac{C}{r-\rho}+\frac{C}{k^2}\Theta.
	\end{align}
	We now compute $\partial_{\alpha}\partial_{\beta}H$, which we will later need to compute $\hess(H)$ and $\Delta H$. Towards this aim we use \eqref{stime eta pt1} and \eqref{estimeta} in $\abs{\partial_{\rho}\partial_{\rho}H}$ and \eqref{eta rho rho e eta rho}, \eqref{eta ang ed eta rho} and $\abs{\eta_{ij}}\leq Cr\abs{\eta_{\rho}},\, \abs{\partial_l\eta_{ij}}\leq Cr\abs{\eta_{\rho}}$ in the remaining terms:
	\begin{align}\label{der parziale seconda H}
		\abs{\partial_{\rho}\partial_{\rho}H}=&\Bigg|\partial_\rho\partial_{\rho}L+\frac{\partial_\rho\partial_{\rho}L}{1+k^2\eta_{\rho}^2}-\frac{Ck^2\eta_{\rho}\eta_{\rho\rho}\partial_\rho L}{(1+k^2\eta_{\rho}^2)^2}-\frac{2k^2L\eta_{\rho\rho}^2}{(1+k^2\eta_{\rho}^2)^2}-\frac{2k^2L\eta_{\rho\rho\rho}\eta_{\rho}}{(1+k^2\eta_{\rho}^2)^2}\\
		&+\frac{8k^4\eta_{\rho}^2\eta_{\rho\rho}^2L}{(1+k^2\eta_{\rho}^2)^3}+\frac{Ck^2\partial_\rho\partial_\rho\eta_{ANG}\eta_{ANG}}{1+k^2\eta_{\rho}^2}+\frac{Ck^2\partial_\rho\eta_{ANG}\partial_{\rho}\eta_{ANG}}{1+k^2\eta_{\rho}^2}\notag\\
		&-\frac{Ck^4\eta_{\rho}\eta_{\rho\rho}\partial_{\rho}\eta_{ANG}\eta_{ANG}}{(1+k^2\eta_{\rho}^2)^2}-\frac{Ck^4\eta_{\rho\rho}^2\eta_{ANG}\eta_{ANG}}{(1+k^2\eta_{\rho}^2)^2}-\frac{Ck^4\eta_{\rho}\eta_{\rho\rho\rho}\eta_{ANG}\eta_{ANG}}{(1+k^2\eta_{\rho}^2)^2}\notag\\
		&+\frac{Ck^6\eta_{\rho}^2\eta_{\rho\rho}^2\eta_{ANG}\eta_{ANG}}{(1+k^2\eta_{\rho}^2)^3}+\frac{2k^2\eta_{\rho\rho\rho\rho}\eta_{ll}}{(1+k^2\eta_{\rho}^2)^2}+\frac{2k^2\eta_{\rho\rho\rho}\partial_{\rho}\eta_{ll}}{(1+k^2\eta_{\rho}^2)^2}-\frac{8k^4\eta_{\rho}\eta_{\rho\rho}\eta_{\rho\rho\rho}\eta_{ll}}{(1+k^2\eta_{\rho}^2)^3}\notag\\
		&+\frac{2k^2\eta_{\rho\rho}\partial_{\rho}\partial_{\rho}\eta_{ll}}{(1+k^2\eta_{\rho}^2)^2}+\frac{2k^2\eta_{\rho\rho\rho}\partial_\rho\eta_{ll}}{(1+k^2\eta_{\rho}^2)^2}-\frac{8k^4\eta_{\rho}\eta_{\rho\rho}^2\partial_{\rho}\eta_{ll}}{(1+k^2\eta_{\rho}^2)^3}	-\frac{8k^4\eta_{\rho\rho}^3\eta_{ll}}{(1+k^2\eta_{\rho}^2)^3}\notag\\
		&-\frac{16k^4\eta_{\rho\rho\rho}\eta_{\rho\rho}\eta_{\rho}\eta_{ll}}{(1+k^2\eta_{\rho}^2)^3}-\frac{8k^4\eta_{\rho}\eta_{\rho\rho}^2\partial_{\rho}\eta_{ll}}{(1+k^2\eta_{\rho}^2)^3}+\frac{48k^6\eta_{\rho}^2\eta_{\rho\rho}^3\eta_{ll}}{(1+k^2\eta_{\rho}^2)^4}
		\Bigg|\notag\\
		\leq& C+C+\frac{C}{k^2}\Theta+\frac{C}{k^2}\Theta+\frac{C}{k^2}\Theta+\frac{C}{k^2}\Theta+C\Theta+C\Theta+C\Theta+C\Theta+C\Theta+C\Theta\notag\\
		&+\frac{C}{k^2}\Theta+\frac{C}{k^2}\Theta+\frac{C}{k^2}\Theta+\frac{C}{k^2}\Theta+\frac{C}{k^2}\Theta+\frac{C}{k^2}\Theta+\frac{C}{k^2}\Theta+\frac{C}{k^2}\Theta+\frac{C}{k^2}\Theta+\frac{C}{k^2}\Theta\notag\\
		\leq& C+\frac{C}{k^2}\Theta+C\Theta;\notag\\
		\abs{\partial_l\partial_{\rho}H}=&\Bigg|\partial_l\partial_\rho L+\frac{\partial_l\partial_\rho L}{1+k^2\eta_{\rho}^2}-\frac{2k^2\partial_lL\eta_{\rho}\eta_{\rho\rho}}{(1+k^2\eta_{\rho}^2)^2}+\frac{Ck^2\partial_l\partial_{\rho}\eta_{ANG}\eta_{ANG}}{1+k^2\eta_{\rho}^2}+\frac{Ck^2\partial_{\rho}\eta_{ANG}\partial_l\eta_{ANG}}{1+k^2\eta_{\rho}^2}\notag\\&-\frac{Ck^4\eta_{\rho}\eta_{\rho\rho}\partial_l\eta_{ANG}\eta_{ANG}}{(1+k^2\eta_{\rho}^2)^2}+\frac{2k^2\eta_{\rho\rho\rho}\partial_{l}\eta_{tt}}{(1+k^2\eta_{\rho}^2)^2}+\frac{2k^2\eta_{\rho\rho}\partial_l\partial_\rho\eta_{tt}}{(1+k^2\eta_{\rho}^2)^2}-\frac{8k^4\eta_{\rho}\eta_{\rho\rho}^2\partial_l\eta_{tt}}{(1+k^2\eta_{\rho}^2)^3}\notag\Bigg|\notag\\
		\leq&C+C+\frac{C}{k^2}\Theta+\frac{C}{r-\rho}+\frac{C}{r-\rho}+\frac{C}{r-\rho}+\frac{C}{k^2}\Theta+\frac{C}{k^2}\Theta+\frac{C}{k^2}\Theta\notag\\
		\leq&C+\frac{C}{k^2}\Theta+\frac{C}{r-\rho};\notag\\
		\abs{\partial_t\partial_i H}=&\abs{\partial_t\partial_iL+\frac{\partial_t\partial_iL}{1+k^2\eta_{\rho}^2}+\frac{C k^2\partial_t\partial_i\eta_{ANG}\eta_{ANG}}{1+k^2\eta_{\rho}^2}+\frac{C k^2\partial_i\eta_{ANG}\partial_t\eta_{ANG}}{1+k^2\eta_{\rho}^2}+\frac{2k^2\eta_{\rho\rho}\partial_t\partial_i\eta_{ll}}{(1+k^2\eta_{\rho}^2)^2}}\notag\\
		&\leq C+C+C+C+\frac{C}{k^2}\Theta\notag\\
		\leq& C+\frac{C}{k^2}\Theta.\notag\notag
	\end{align}
	It follows by \eqref{der parziale seconda H},
	\begin{align}\label{hess H}
		\abs{\nabla_{\rho}\nabla_{\rho}H}=&\abs{\partial_\rho\partial_\rho H}\leq C\pa{1+\Theta}\notag\\
		\abs{\nabla_\rho\nabla_lH}=&\abs{\nabla_l\nabla_{\rho}H}=\abs{\partial_l\partial_{\rho}H-\Gamma_{l\rho}^t\nabla_tH}\notag\\
		\leq&\abs{\partial_l\partial_{\rho}H}+\abs{\Gamma_{l\rho}^t\nabla_tH}\notag\\
		\leq&C\pa{1+\frac{1}{r-\rho}+\frac{1}{k^2}\Theta}+C\rho\pa{1+\frac{1}{k^2}\Theta}\leq C\pa{1+\frac{1}{r-\rho}+\frac{1}{k^2}\Theta};\\
		\abs{\nabla_t\nabla_i H}=&\abs{\partial_t\partial_i H-\Gamma^\alpha_{ti}\nabla_\alpha H}=\abs{\partial_t\partial_i H-\Gamma^\rho_{ti}\nabla_\rho H-\Gamma^l_{ti}\nabla_l H}\notag\\
		\leq&C\pa{1+\frac{1}{k^2}\Theta}+C\rho\pa{1+\frac{1}{r-\rho}+\frac{1}{k^2}\Theta}+\rho\pa{1+\frac{1}{k^2}\Theta}\notag\\
		\leq&C\pa{1+\frac{1}{r-\rho}+\frac{1}{k^2}\Theta}\notag
	\end{align}
	where in the last two inequality we have used \eqref{stime christoffel} and \eqref{nabla H rho 2}.
	The Hessian of $\ol{S}$ is given by
	\begin{align*}
		\ol{\nabla}_\alpha\ol{\nabla}_\beta\ol{S}=\nabla_{\alpha}\nabla_\beta S+\nabla_{\alpha}\nabla_{\beta}H-G^\tau_{\alpha\beta}\nabla_{\tau}S+G^\tau_{\alpha\beta}\nabla_{\tau}H,
	\end{align*}
	then, to get an estimate on its norm with respect to the metric $\ol{g}$, we need to compute
	\begin{align*}
		\abs{\ol{\nabla}_{\rho}\ol{\nabla}_{\rho}\ol{S}}_{\ol{g}};\quad \abs{\ol{\nabla}_i\ol{\nabla}_{\rho}\ol{S}}_{\ol{g}};\quad \abs{\ol{\nabla}_i\ol{\nabla}_j\ol{S}}_{\ol{g}},
	\end{align*} 
	that are
	\begin{align*}
		\abs{\ol{\nabla}_\rho\ol{\nabla}_\rho\ol{S}}_{\ol{g}}&=\frac{1}{1+k^2\eta_{\rho}}\abs{\ol{\nabla}_\rho\ol{\nabla}_\rho\ol{S}}_g=\frac{1}{1+k^2\eta_{\rho}}\abs{\ol{\nabla}_\rho\ol{\nabla}_\rho\ol{S}}\\
		&=\frac{1}{(1+k^2\eta_{\rho})^2}\abs{\nabla_{\rho}\nabla_{\rho}S+\nabla_{\rho}\nabla_{\rho}H-G^\rho_{\rho\rho}\nabla_{\tau}S+G^\rho_{\rho\rho}\nabla_{\rho}H}\\
		&\leq \frac{1}{1+k^2\eta_{\rho}}\sq{C+C\Theta+C\Theta+C\Theta}\\
		&\leq C\pa{1 + \frac{1}{k^2}\Theta};\\
		\abs{\ol{\nabla}_i\ol{\nabla}_\rho\ol{S}}_{\ol{g}}&=\frac{1}{(1+k^2\eta_{\rho}^2)^{\frac{1}{2}}}\abs{\ol{\nabla}_i\ol{\nabla}_{\rho}\ol{S}}_g\leq\frac{C}{(1+k^2\eta_{\rho}^2)^{\frac{1}{2}}}\abs{\ol{\nabla}_i\ol{\nabla}_{\rho}\ol{S}}\\
		&=\frac{1}{(1+k^2\eta_{\rho}^2)^{\frac{1}{2}}}\abs{\nabla_{i}\nabla_\rho S+\nabla_{i}\nabla_{\rho}H-G^\tau_{i\rho}\nabla_{\tau}S+G^\tau_{i\rho}\nabla_{\tau}H}\\
		&=\frac{1}{(1+k^2\eta_{\rho}^2)^{\frac{1}{2}}}\abs{\nabla_{i}\nabla_\rho S+\nabla_{i}\nabla_{\rho}H}\\
		&\leq\frac{1}{(1+k^2\eta_{\rho}^2)^{\frac{1}{2}}}\sq{C\pa{1+\frac{1}{k^2}\Theta}+C\pa{1+\frac{1}{r-\rho}+\frac{1}{k^2}\Theta}}\\
		&\leq C\pa{1+\frac{1}{r-\rho}+\frac{1}{k^2}\Theta};\\
		\abs{\ol{\nabla}_i\ol{\nabla}_j\ol{S}}_{\ol{g}}&= \abs{\ol{\nabla}_i\ol{\nabla}_j\ol{S}}_g\leq C\abs{\ol{\nabla}_i\ol{\nabla}_j\ol{S}}\\
		&=\abs{\nabla_{i}\nabla_j S+\nabla_{i}\nabla_{j}H-G^\rho_{ij}\nabla_{\rho}S+G^\rho_{ij}\nabla_{\rho}H}\\
		&\leq C+C\pa{1+\frac{1}{r-\rho}+\frac{1}{k^2}\Theta}+C\rho+C\pa{1+\frac{1}{r-\rho}+\frac{1}{k^2}\Theta}\\
		&\leq C\pa{1+\frac{1}{r-\rho}+\frac{1}{k^2}\Theta},
	\end{align*}
	where we have used equations \eqref{normgbarg}, \eqref{G}, \eqref{nab scal} and \eqref{hess H} in the estimate of $\abs{\ol{\nabla}_\rho\ol{\nabla}_\rho\ol{S}}_{\gb}$ and \eqref{normgbarg}, \eqref{G}, \eqref{nabla H rho 2} and \eqref{hess H} in the the estimates of $\abs{\ol{\nabla}_i\ol{\nabla}_\rho\ol{S}}_{\ol{g}}$ and $\abs{\ol{\nabla}_i\ol{\nabla}_j\ol{S}}_g$.
	Therefore, we obtain
	\begin{equation}\label{hessS}
		\abs{\ol{\hess}\,\ol{S}}_{\gb}=\abs{\ol{\nabla}_{\rho}\ol{\nabla}_{\rho}\ol{S}}_{\ol{g}}+\abs{\ol{\nabla}_{\rho}\ol{\nabla}_i\ol{S}}_{\ol{g}}+\abs{\ol{\nabla}_{i}\ol{\nabla}_{\rho}\ol{S}}_{\ol{g}}+\abs{\ol{\nabla}_i\ol{\nabla}_j\ol{S}}_{\ol{g}}\leq C\pa{1+\frac{1}{r-\rho}+\frac{1}{k^2}\Theta}
	\end{equation}
	and, by \eqref{hess H} and the definition of $\gb$, we also have
	\begin{align}\label{lapS}
		\abs{\bar{\Delta}\ol{S}}_{\gb}=\abs{\ol{g}^{\alpha\beta}\ol{\nabla}_{\alpha}\ol{\nabla}_{\beta}\ol{S}}_{\gb}=\abs{\gb^{\rho\rho}\ol{\nabla}_{\rho}\ol{\nabla}_{\rho}\ol{S}+\gb^{ti}\ol{\nabla}_{t}\ol{\nabla}_{i}\ol{S}}_{\gb}\leq C\pa{1+\frac{1}{r-\rho}+\frac{1}{k^2}\Theta}.
	\end{align}
	As a consequence, putting together \eqref{SRic}, \eqref{bach pezzi}, \eqref{lapRic}, \eqref{hessS} and \eqref{lapS}, we deduce:
	\begin{equation}\label{stima barbach}
		\abs{\ol{\bach}}_{\gb}\leq C\pa{1+\frac{1}{r-\rho}+\frac{1}{k^2}\Theta},
	\end{equation}
	which implies \eqref{estbach}.

\

	\section*{Acknowledgements}
	The authors would like to thank Professor P. Mastrolia for
	his useful help regarding the computations in Section
	\ref{app} and his suggestions about the whole manuscript. 
	All three authors are members of the Gruppo Nazionale per le Strutture Algebriche, Geometriche e loro
	Applicazioni (GNSAGA) of INdAM (Istituto Nazionale di Alta Matematica).

\
	
	\bibliographystyle{abbrv}
	\bibliography{biblio_smallBach}
\end{document}